\newcommand{\R}{\mathbb{R}}
\newcommand{\N}{\mathbb{N}}
\newcommand{\eps}{ \varepsilon}
\newcommand{\supp}{\textrm{supp }}
\numberwithin{equation}{section}
\newtheorem{theorem}{Theorem}[section]
\newtheorem*{theorem*}{Theorem}
\newtheorem{corollary}{Corollary}[section]
\newtheorem{proposition}{Proposition}[section]
\newtheorem{lemma}{Lemma}[section]
\theoremstyle{definition}
\newtheorem{definition}{Definition}[section]
\author{Wentao Cao}
\address{Institut f\"{u}r mathematik, Universit\"{a}t Leipzig, D-04109, Leipzig, Germany}
\email{wentao.cao@math.uni-leipzig.de}
\author{L\'aszl\'o Sz\'ekelyhidi Jr.}
\address{Institut f\"{u}r mathematik, Universit\"{a}t Leipzig, D-04109, Leipzig, Germany}
\email{laszlo.szekelyhidi@math.uni-leipzig.de}
\title[$C^{1,\alpha}$ isometric extensions]
{$C^{1,\alpha}$ isometric extensions}
\date{\today}
\keywords{isometric extension, convex integration, metric decomposition}
\subjclass[2010]{53B20, 53A07, 57R40, 35F60, 58B20}
\begin{document}

\begin{abstract}
In this paper we consider the Cauchy problem for isometric immersions. More precisely, given a smooth isometric immersion of a codimension one submanifold we construct $C^{1,\alpha}$ isometric extensions for any $\alpha<\frac{1}{n(n+1)+1}$ via the method of convex integration.
\end{abstract}

\maketitle

%
\section{Introduction}
The problem of extending isometric embeddings is considered in this paper. More precisely, let $\Sigma$ be a codimension one submanifold of a smooth $n$-dimensional Riemannian manifold $(\mathcal{M}, g)$
and let $f:\Sigma\rightarrow\R^q$ be a smooth isometric immersion, $q\geq n+1.$ The problem is to extend $f$ to be an isometric immersion $u:\mathcal{M}\rightarrow\R^q.$ This problem was first considered by Jacobowitz in \cite{Jac74} when the target dimension is $q=n_*=\frac{1}{2}n(n+1)$ in the high-regularity setting. In particular, Jacobowitz showed that this extension problem can viewed as a Cauchy problem, and showed that local extensions can be obtained under certain geometric conditions (i) by a variant of the Cauchy-Kowalevskaja Theorem in the analytic setting and (ii) by an adaptation of Nash's implicit function theorem in the $C^k$ setting with $k\geq17.$ The geometric sufficient condition of Jacobowitz (see below for precise statement) can be seen as stating that the image $f(\Sigma)$ has to be ``more curved" than $\Sigma$, and it was shown in \cite{Jac74} that this condition is almost necessary -- indeed, if $f(\Sigma)$ is geodesic but $\Sigma$ is not, no isometric extension can exist. 

Viewing the extension problem as an initial-value problem has also played an important role in progress concerning Schl\"afli's conjecture (see \cite{Gr17}) on the local embedding of surfaces in $\R^3$. 
For extending from one point where the Gaussian curvature is changing signs in a 2-dimensional manifold, sufficiently smooth embeddings into $\R^3$ have been constructed in a small neighbourhood of such point in \cite{Lin, Han05} by imposing some non-degeneracy conditions on the vanishing of the curvature. Similarly, for the isometric extension from a small curve across which  Gaussian curvature vanishes at some order, sufficiently smooth local or semi-global isometric embeddings  into $\R^3$ also exist in some neighbourhood of such curve under some conditions, see \cite{Han06, Kh, Dong}. 

The isometric extension problem can also be considered in the low-regularity and low codimension setting. For the classical global isometric immersion problem of $u:\mathcal{M}\rightarrow\R^{n+1}$ the celebrated Nash-Kuiper Theorem  \cite{Nash54, Kui55} provides solutions of class $u\in C^1$. More precisely, any smooth strictly short immersion $\bar u:\mathcal{M}\rightarrow\R^{n+1}$ may be uniformly approximated by $C^1$ isometric immersions -- this high flexibility is a paradigm example of Gromov's h-principle \cite{Gro86}. We recall that being isometric amounts to, in local coordinates, to the following system of first order partial differential equations: 
$$\frac{\partial u}{\partial x_i}\cdot\frac{\partial u}{\partial x_j}=g_{ij},$$
where $g=\sum_{i, j=1}^ng_{ij}dx^idx^j$ is  the metric in local coordinates, whereas $u$ is strictly short if the metric tensor $g_{ij}-\frac{\partial u}{\partial x_i}\cdot\frac{\partial u}{\partial x_j}$ is a positive definite tensor. The Nash-Kuiper Theorem has been extended to  $C^{1, \alpha}$  in 
\cite{Bor65} and subsequently in \cite{CDS12} for $\alpha<(2n_*+1)^{-1}$ in the local case and for $\alpha<(2(n+2)n_*+1)^{-1}$ in the global case, where $n_*=\frac{1}{2}n(n+1)$. Further progress in the local 2-dimensional case was achieved in \cite{DIS}, for $\alpha<1/5$. Let us mention that the interest in $C^{1, \alpha}$ immersions is two-fold. On the one hand  there is an interesting dichotomy between the high flexibility provided by the Nash-Kuiper Theorm for $C^{1, \alpha}$ with small $\alpha$ and extensions of classical rigidity theorems \cite{Bor04, CDS12} for $\alpha$ close to 1. On the other hand there is an unexpected parallel between the theory $C^{1, \alpha}$ isometric immersions and the theory of $C^{\alpha}$ weak solutions of the incompressible Euler equations and more general classes of hydrodynamic equations-this connection eventually leads to the resolution of Onsager's conjecture in 3-dimensional turbulence, see \cite{B16, BDIS15,BDLSV17, Ise16, DS14}. The Cauchy problem for the Euler equations has been considered in \cite{D14, DS17}.

In this paper, we aim to explore more in this parallel direction, that is, study the local extension problem in low codimension and low regularity. In \cite{HunWas16} Hungerb\"uhler and Wasem first considered this problem and showed that in the $C^1$ category the analogue of the Nash-Kuiper statement is valid for one-sided isometric extensions. In this paper we consider $C^{1, \alpha}$ isometric extensions from adapted short immersions (see definition below). In particular, we also construct a one-sided  adapted short immersions from short immersion, and  then a $C^{1, \alpha}$ isometric extension. 

\subsection{Main results and ideas.}
In order to keep the presentation simple, we focus on the case of a single chart. That is, $\Omega\subset\R^n$ is an open bounded set equipped with a smooth metric $g=(g_{ij})$. In the coordinates $x_1,\dots,x_n\in\Omega$ we denote the derivative of a map $u:\Omega\to\R^{n+1}$ by $\nabla u=(\partial_ju_i)_{i,j}$. Then, a differentiable map $u$ is isometric if $g=\nabla u^T\nabla u$ and strictly short 
if $g-\nabla u^T\nabla u>0$, i.e. the $n\times n$ matrix $(g_{ij}-\partial_iu\cdot \partial_ju)_{ij}$ is positive definite for all $x$. As usual, we call $u$ an immersion if $\nabla u(x)$ has rank $n$ for every $x$, and an embedding if in addition $u$ is 1-1.

In analogy with \emph{adapted subsolutions} introduced in \cite{DS17}, we define \emph{adapted short immersions} as follows.
\begin{definition}\label{d:adapted}
Let $\Omega\subset\R^n$ be a bounded open set equipped with a smooth metric $g=(g_{ij})$. We call a map $u:\overline{\Omega}\rightarrow\R^{n+1}$ a $C^{1, \alpha}$-adapted short immersion (embedding) with constants $M,r>0$, if $u\in C^{1, \alpha}(\overline{\Omega})$ is a short immersion (embedding) in $\overline{\Omega}$ with
$$
g-\nabla u^T\nabla u=\rho^2(Id+G)
$$
with a nonnegative function $\rho\in C(\overline{\Omega})$ and symmetric tensor $G\in C(\overline{\Omega};\R^{n\times n})$ satisfying 
\begin{equation}\label{e:g-adapt0}
|G(x)|\leq r\quad\textrm{ for all }x\in\overline{\Omega}
\end{equation}
and the following additional estimates hold: $u\in C^2(\overline{\Omega}\setminus\{\rho=0\})$ and $\rho,G\in C^1(\overline{\Omega}\setminus\{\rho=0\})$ with 
\begin{align}
|\nabla^2u(x)|&\leq M\rho(x)^{-2}, \label{e:u-adapt}\\
|\nabla\rho(x)|&\leq M\rho(x)^{-1},\label{e:rho-adapt} \\
\quad |\nabla G(x)|&\leq M\rho(x)^{-3},\label{e:g-adapt}
\end{align}
for any $x\in\overline{\Omega}\setminus \{\rho=0\}$. 
\end{definition}

One of our main results is to show how the existence of a $C^{1,\alpha}$ extension can be reduced to the existence of an $C^{1, \alpha}$ adapted short immersion.

\begin{theorem}{\bf [Isometric approximation]}\label{t:isometricextend}
Let $\Omega\subset\R^n$ be a bounded open set equipped with a smooth metric $g=(g_{ij})$ and let $\Sigma\subset\overline{\Omega}$ be a $C^2$ codimension one submanifold. Let $u:\overline{\Omega}\to\R^{n+1}$ be a $C^{1, \alpha_0}$ adapted short immersion for some $\alpha_0\geq\frac{1}{n(n+1)+1}$ with $\nabla u^T\nabla u=g$ on $\Sigma$. Then, for any $\epsilon>0$ and any $\alpha<\frac{1}{n(n+1)+1}$, there exists a $C^{1, \alpha}$ isometric immersion $\bar{u}:\overline{\Omega}\to\R^{n+1}$ such that  $\|\bar{u}-u\|_{C^0(\Omega)}<\epsilon$.
\end{theorem}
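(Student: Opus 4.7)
The proof is a Nash-Kuiper style convex integration scheme in the spirit of \cite{CDS12}, adapted to the presence of a degenerate boundary region where $u$ is already isometric. Starting from $u_0 = u$, one constructs a sequence of adapted short immersions $u_q:\overline{\Omega}\to\R^{n+1}$ with defects $g-\nabla u_q^T\nabla u_q=\rho_q^2(Id+G_q)$, where $\rho_q\to 0$ uniformly and $|G_q|\leq r$. The sequence is organized into \emph{stages}, each consisting of $n_*=n(n+1)/2$ elementary \emph{steps}. At the beginning of a stage, the primitive metric decomposition $Id+G_q=\sum_{k=1}^{n_*}a_k^2\,\xi_k\otimes\xi_k$ (valid since $|G_q|\leq r$ is small) identifies the $n_*$ rank-one perturbations that must be absorbed. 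Each step adds a Nash-type spiral of the form $\lambda_q^{-1}\rho_q a_k\bigl(\sin(\lambda_q\xi_k\cdot x)\zeta_1+\cos(\lambda_q\xi_k\cdot x)\zeta_2\bigr)$ with $(\zeta_1,\zeta_2)$ an orthonormal frame of the normal bundle of the current immersion. With super-exponentially growing frequencies $\lambda_{q+1}=\lambda_q^b$, $b>1$, this produces the standard stage estimates $\|u_{q+1}-u_q\|_{C^j}\lesssim \rho_q\lambda_q^{j-1}$ for $j=0,1,2$; interpolation then gives convergence of $u_q$ to a $C^{1,\alpha}$ limit $\bar u$ provided $\alpha<1/(2n_*+1)=1/(n(n+1)+1)$, which is precisely the exponent in the theorem.

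To close the stage estimates one first mollifies $u_q$ and the defect tensor at a scale $\ell_q$ to obtain higher-derivative control, then carries out the primitive decomposition on the mollified quantities; the new defect consists of a quadratic interaction of the spirals (whose high-frequency average reproduces $\rho_q^2(Id+G_q)$ by $\sin^2+\cos^2=1$) plus mollification commutator errors, all of which must be absorbed into $\rho_{q+1}^2(Id+G_{q+1})$. The approximation statement $\|\bar u-u\|_{C^0(\Omega)}<\epsilon$ then follows by summing the geometric telescope after choosing $\lambda_0$ sufficiently large, and the isometric property in the limit from $\rho_q\to 0$.

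The principal technical obstacle is the \emph{degeneracy at the boundary} $\{\rho=0\}\supset\Sigma$: the perturbations must vanish there (to preserve the matching $\nabla\bar u^T\nabla\bar u=g$ on $\Sigma$), yet the bounds $|\nabla^2 u|\leq M\rho^{-2}$, $|\nabla\rho|\leq M\rho^{-1}$, $|\nabla G|\leq M\rho^{-3}$ in Definition \ref{d:adapted} blow up as $\rho\to 0$, so the interior convex integration estimates cannot be applied uniformly. My strategy is to localize the $q$-th stage perturbation to the region $\{\rho\geq c_q\}$ by a suitable cutoff, with a carefully chosen threshold $c_q\downarrow 0$, treating the thin collar $\{\rho<c_q\}$ as a zone where the defect is already small enough to absorb into $\rho_{q+1}^2$ without further correction. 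The exponents $\rho^{-1},\rho^{-2},\rho^{-3}$ in Definition \ref{d:adapted} are exactly those which, after balancing against the mollification scale $\ell_q$ and the threshold $c_q$, produce commutator error estimates of the same order as in the non-degenerate regime. Tuning $c_q$, $\ell_q$, $\lambda_q$, and the cutoff functions so as to simultaneously close these singular boundary estimates \emph{and} the standard interior estimates is the technical heart of the argument; because every perturbation is compactly supported in $\{\rho>0\}$, the boundary condition on $\Sigma$ is preserved automatically in the limit.
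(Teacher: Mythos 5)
Your strategy coincides with the paper's. The proof there is exactly a \cite{CDS12}-style stage iteration in which the $q$-th perturbation is cut off to the super-level set $\{\rho_q\gtrsim\eps_{q+1}^{1/2}\}$ (your threshold $c_q$ is $\eps_{q+1}^{1/2}$ with $\eps_q=\eps_0A^{-2aq}$), the thin collar is left untouched ($v_{q+1}=v_q$ there, which preserves $\nabla \bar u^T\nabla\bar u=g$ on $\Sigma$), and the exponents $\rho^{-1},\rho^{-2},\rho^{-3}$ of Definition \ref{d:adapted} are precisely what converts the degenerate bounds into uniform stage hypotheses $|\nabla^2v_q|\leq M\eps_j^{1/2}\theta_j$, $|\nabla\rho_q|\leq M\eps_{j+1}^{1/2}\theta_j$, $|\nabla G_q|\leq M\theta_j$ on each level set $\Omega_j^{(q)}$; convergence in $C^{1,\alpha}$ for $\alpha<1/(2n_*+1)$ then follows by the interpolation you describe. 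The paper makes ``absorbing the collar'' precise by adding $(g-\nabla v_q^T\nabla v_q-\eps_{q+1}\textrm{Id})\phi_q^2$ in a single application of Corollary \ref{c:stage} and setting $\rho_{q+1}^2=\rho_q^2(1-\phi_q^2)+\eps_{q+1}\phi_q^2$, the choice of radii $r_1\leq r_2/5\leq r_0/25$ keeping the new relative error $G_{q+1}$ under control.

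One concrete correction: your building block, a spiral $\sin(\lambda\xi_k\cdot x)\zeta_1+\cos(\lambda\xi_k\cdot x)\zeta_2$ with $(\zeta_1,\zeta_2)$ an orthonormal frame of the normal bundle, does not exist in this setting --- the target is $\R^{n+1}$, so the normal bundle of an $n$-dimensional immersion has rank one. One must instead use the codimension-one corrugation $\lambda^{-1}\bigl(\Gamma_1(\tilde a,\lambda x\cdot\nu)\xi+\Gamma_2(\tilde a,\lambda x\cdot\nu)\zeta\bigr)$ of Lemma \ref{l:Gamma} (as in \cite{CDS12} and Proposition \ref{p:step}), where $\xi$ is essentially tangential. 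This changes the error analysis --- the tangential component generates the additional terms $E_1$, $E_2^{(1)}$, $E_2^{(2)}$ handled in the proof of Proposition \ref{p:step} --- but not the final H\"older exponent.
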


The basic method for obtaining $C^{1,\alpha}$ isometric immersions has been developed in \cite{CDS12}. Indeed, the bound $\alpha<\frac{1}{n(n+1)+1}=\frac{1}{2n_*+1}$ on the H\"older exponent agrees with the bound obtained in \cite{CDS12}. However, we need to substantially modify the proof in order to make sure $\bar{u}=u$ on $\Sigma$. This requires us to modify the mollification argument (see Section \ref{s:iteration}), which is designed to handle the problem with loss of derivatives. Then we adapt the strategy introduced in \cite{DS17} of decomposing the domain $\overline{\Omega}\setminus\Sigma$ in level sets of the metric error (see Section \ref{s:extension}). 
  
In turn, one needs to consider criteria under which such an adapted short extension exists. In this paper we give a sufficient criterion, following the approach in \cite{HunWas16}. To recall the general setting, let $(\mathcal{M},g)$ be a smooth Riemannian manifold with a smooth codimension 1 submanifold $\Sigma$ and an isomeric immersion $f:\Sigma\to\R^{n+1}$. For any point $p\in \Sigma$  
 we denote, as usual, by $T_p\Sigma$ the tangent vector space of $\Sigma\subset\mathcal{M}$, and $L: T_p\Sigma\times T_p\Sigma\rightarrow\R^1$ and $\bar{L}: T_p\Sigma\times T_p\Sigma\rightarrow N_{f(p)}$ are the second fundamental forms of $\Sigma\subset\mathcal{M}$ and $f(\Sigma)\subset\R^{n+1}$ respectively, with $N_{f(p)}$ denoting the normal space to $f(\Sigma)$ at $f(p).$ Here $\langle\cdot, \cdot\rangle$ denotes Euclidean scalar product.

In \cite{Jac74} Jacobowitz showed that a sufficient condition for the existence of a smooth isometric extension of $f:\Sigma\to\R^{n+1}$ is that
there exists a vectorfield $\mu:\Sigma\to\R^{n+1}$ with
\begin{equation}\label{e:J}
\begin{split}
\textrm{(i)}&\quad \mu(p)\in N_{f(p)},\\
\textrm{(ii)}&\quad |\mu(p)|<1,\\
\textrm{(iii)}&\quad \langle\mu(p), \bar{L}(\cdot, \cdot)\rangle =L(\cdot,\cdot)\quad\textrm{Êon }T_p\Sigma
\end{split}
\end{equation}
for any $p\in\Sigma$. Moreover, a simple example in \cite{Jac74} shows that condition \eqref{e:J} is optimal in the sense that in general one cannot replace (ii) with $|\mu(p)|\leq 1$. On the other hand, in \cite{HunWas16} Hungerb\"uhler-Wasem showed that a sufficient condition for the existence of a $C^1$ \emph{one-sided} isometric extension is that
there exists a vectorfield $\mu:\Sigma\to\R^{n+1}$ with
\begin{equation}\label{e:HW}
\begin{split}
\textrm{(i)}&\quad \mu(p)\in N_{f(p)},\\
\textrm{(ii)}&\quad |\mu(p)|=1,\\
\textrm{(iii)}&\quad \langle\mu(p), \bar{L}(\cdot, \cdot)\rangle -L(\cdot,\cdot)\textrm{ is positive definite on }T_p\Sigma
\end{split}
\end{equation}
for any $p\in\Sigma$. We recall that a one-sided extension is defined in \cite{HunWas16} as the extension of $f$ to a one-sided neighbourhood of $\Sigma$. The latter can be defined as follows.
Let $B$ be a ball centered at zero in $\R^n$, which is a local chart of $\mathcal{M}$ for an open set containing $\Sigma$ and let $B_0=\bar{B}\cap(\R^{n-1}\times\{0\})$, then  $\bar{B}\cap(\R^{n-1}\times\R_{\leq0})$ and $\bar{B}\cap(\R^{n-1}\times\R_{\geq0})$ are both called one-sided neighbourhoods of $B_0$.  A one-sided neighbourhood $\Omega$ of a point in $\Sigma$ is the image of a one-sided neighbourhood of $B_0$ under the inverse of such local chart.

Our second main result is the existence of an adapted short extension under the condition \eqref{e:HW}.

\begin{theorem} {\bf [Adapted short extension]}\label{t:shortextend}
Let $f:\Sigma\rightarrow\R^{n+1}$ be a smooth isometric immersion, where $\Sigma$ is a codimension one submanifold of an $n$-dimensional Riemannian manifold $(\mathcal{M}, g)$. If there exists a vectorfield $\mu$ satisfying condition \eqref{e:HW} at any point $p\in\Sigma$, then there exists a one-sided neighbourhood $\Omega$ of $\Sigma$ at $p$, such that for any $\alpha_0<1/3$ there exists a $C^{1, \alpha_0}$ adapted short immersion $u:(\overline{\Omega},g)\to\R^{n+1}$ with $\nabla u^T\nabla u=g$ and $u=f$ on $\Sigma$.
\end{theorem}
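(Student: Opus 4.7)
The plan is to construct $u$ explicitly by extruding $f$ along the normal direction $\mu$, with a small quadratic slowdown in the transverse variable that is tuned so that the metric defect becomes strictly positive simultaneously in the tangential and normal directions. Choose Fermi coordinates $(x',x_n)$ about $p\in\Sigma$ so that $\Sigma=\{x_n=0\}$, $g_{in}\equiv 0$ for $i<n$, and $g_{nn}\equiv 1$ on a small neighbourhood; take $\Omega:=\{|x'|<r_0,\ 0\le x_n<r_0\}$ on the chosen side of $\Sigma$. Extending $f$ and $\mu$ trivially in $x_n$, I set
\begin{equation*}
u(x',x_n):=f(x')+\eta(x_n)\,\mu(x'),\qquad \eta(x_n):=x_n-c\,x_n^2,
\end{equation*}
with $c>0$ a fixed constant. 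The relations \eqref{e:HW}(i)--(ii), combined with the Fermi identities $g_{in}|_\Sigma=0$ and $g_{nn}\equiv 1$, give at once $u|_\Sigma=f$ and $\nabla u^T\nabla u=g$ on $\Sigma$.

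Next I compute the metric defect on $\Omega$. Using $\mu\cdot\partial_if=0$ and $|\mu|=1$,
\begin{equation*}
\partial_i u\cdot\partial_n u=\eta'(x_n)\bigl(\mu\cdot\partial_i f+\eta(x_n)\,\mu\cdot\partial_i\mu\bigr)\equiv 0,
\end{equation*}
so the defect is block-diagonal. Differentiating $\mu\cdot\partial_j f\equiv 0$ gives $\partial_i f\cdot\partial_j\mu=-\mu\cdot\partial_i\partial_j f=-\langle\mu,\bar L_{ij}\rangle$, and combining this with $L_{ij}=-\tfrac12\partial_n g_{ij}|_\Sigma$, a Taylor expansion in $x_n$ yields
\begin{equation*}
g-\nabla u^T\nabla u=x_n\,\mathrm{diag}\bigl(2K(x'),\,4c\bigr)+O(x_n^2),\qquad K_{ij}:=\langle\mu,\bar L_{ij}\rangle-L_{ij}.
\end{equation*}
By hypothesis \eqref{e:HW}(iii), $K$ is uniformly positive definite on a neighbourhood of $p$, so for any $c>0$ the leading coefficient matrix is uniformly positive definite, and the full defect remains positive definite on $\Omega\setminus\Sigma$ after shrinking $r_0$.

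Setting $\rho^2:=\tfrac1n\mathrm{tr}(g-\nabla u^T\nabla u)$ then produces $\rho^2=\lambda(x')\,x_n+O(x_n^2)$ with $\lambda:=\tfrac1n(2\,\mathrm{tr}\,K+4c)>0$; hence $\rho\in C(\overline\Omega)\cap C^\infty(\overline\Omega\setminus\Sigma)$ and $\rho\asymp x_n^{1/2}$. Writing $g-\nabla u^T\nabla u=\rho^2(Id+G)$ then defines a tracefree, smooth, and uniformly bounded symmetric tensor $G$ (its bound depending only on the condition number of the leading matrix), so \eqref{e:g-adapt0} holds. The profile $\rho\asymp x_n^{1/2}$ together with the boundedness of $\nabla^2 u$ and $\nabla G$ up to the boundary $\Sigma$ give the remaining estimates $|\nabla\rho|\le M\rho^{-1}$, $|\nabla^2 u|\le M\rho^{-2}$, $|\nabla G|\le M\rho^{-3}$ after choosing $M$ sufficiently large. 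Finally, $u$ is smooth on $\overline\Omega$ and in particular $C^{1,\alpha_0}$ for every $\alpha_0<1/3$.

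The key structural point, and the place where \eqref{e:HW}(iii) is genuinely used, is to arrange that the metric defect vanishes to the same positive order $x_n$ in every block. The tangential $(n-1)\times(n-1)$ block acquires its $O(x_n)$-positive leading term directly from \eqref{e:HW}(iii); the orthogonality relations $\mu\perp df$ and $\mu\perp\partial_i\mu$ force the mixed block to vanish identically; and a linear $\eta$ would leave an identically vanishing $nn$-entry, so the quadratic correction $-cx_n^2$ is the essential device needed to inject a matching $O(x_n)$-positive term $4cx_n$ there without disturbing the other blocks. Once this alignment is secured, the verification of the adapted estimates \eqref{e:u-adapt}--\eqref{e:g-adapt} is routine bookkeeping.
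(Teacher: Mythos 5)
Your Step 1 is essentially the paper's Step 1 (the paper takes $\eta(x_n)=x_n-x_n^2$, i.e.\ $c=1$), and the Taylor computation leading to $g-\nabla u^T\nabla u=x_n\,\mathrm{diag}(2K,4c)+O(x_n^2)$ is correct. The gap is in the claim that writing $g-\nabla u^T\nabla u=\rho^2(\mathrm{Id}+G)$ with $\rho^2=\tfrac1n\mathrm{tr}(g-\nabla u^T\nabla u)$ already yields an adapted short immersion: the resulting $G$ is trace-free and bounded, but its size is
\begin{equation*}
|G|\ \approx\ \Bigl|\tfrac{n}{\mathrm{tr}\,P_1}P_1-\mathrm{Id}\Bigr|,\qquad P_1=\mathrm{diag}(2K,4c),
\end{equation*}
which is controlled only by the eigenvalue anisotropy of $K=\langle\mu,\bar L\rangle-L$. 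This is dictated by the data $f,\Sigma,\mu$ and cannot be made small by any choice of $c$ unless $K$ happens to be nearly conformal. The smallness of $G$ is not a cosmetic part of Definition \ref{d:adapted}: the output of Theorem \ref{t:shortextend} must be fed into Theorem \ref{t:isometricextend}, whose proof requires $|G_0|\le r_1$ with $r_1\le r_0/25$, $r_0$ being the geometric constant of the decomposition Lemma \ref{l:deco}. Your construction gives no such bound.

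This is precisely what the paper's Step 2 supplies, and it is the main content of the proof. One subtracts a definite isotropic part, decomposes $g-\nabla u^T\nabla u-\tau\rho^2\,\mathrm{Id}=\sum_k b_k^2\varpi_k\otimes\varpi_k$ with $b_k\sim x_n^{1/2}$, covers $\Omega$ by a Whitney family of dyadic slabs $\{x_n\sim d_q\}$, and applies the Stage proposition (Proposition \ref{p:stage}) twice (odd and even slabs) to absorb this anisotropic defect into the map. The new immersion $v$ then satisfies $g-\nabla v^T\nabla v=\tau\rho^2(\mathrm{Id}+G)$ with $|G|\le C(\tau K)^{-1}$, arbitrarily small for $K$ large. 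The price is regularity: on the slab $x_n\sim d_q$ the correction has $C^1$ size $d_q^{1/2}$ and $C^2$ size $d_q^{-1}$, so interpolation gives $\|v-u\|_{C^{1,\alpha_0}}\lesssim d_q^{(1-3\alpha_0)/2}$, bounded uniformly in $q$ exactly when $\alpha_0<1/3$. The fact that your construction produces a map smooth up to $\Sigma$ and never meets the threshold $1/3$ appearing in the statement should have signalled that the essential step was missing.
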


Obviously, combining Theorem \ref{t:shortextend} with Theorem \ref{t:isometricextend}, one can easily obtain
\begin{theorem}{\bf [Isometric extension]}
Assume $f:\Sigma\rightarrow\R^{n+1}$ is a smooth isometric immersion, where $\Sigma$ is a codimension one submanifold of an $n$-dimensional Riemannian manifold $(\mathcal{M}, g)$. If there exists a vectorfield $\mu$ satisfying condition \eqref{e:HW} at any point $p\in\Sigma$, then there exists a one-sided neighbourhood $\Omega$ of $\Sigma$ at $p$, such that for any $\alpha<\frac{1}{n(n+1)+1}$ there exists a $C^{1, \alpha}$ isometric immersion $u:(\overline{\Omega},g)\to\R^{n+1}$ with $u=f$ on $\Sigma$.
\end{theorem}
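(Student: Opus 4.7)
The plan is simply to chain the two preceding theorems. First I would apply Theorem \ref{t:shortextend}: under hypothesis \eqref{e:HW}, for any $\alpha_0 < 1/3$ there exists a one-sided neighbourhood $\Omega$ of $p$ in $\mathcal{M}$ and a $C^{1,\alpha_0}$ adapted short immersion $u:\overline{\Omega}\to\R^{n+1}$ satisfying $\nabla u^T\nabla u = g$ on $\Sigma$ and $u = f$ on $\Sigma$.

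Second, in order to feed $u$ into Theorem \ref{t:isometricextend}, I must choose $\alpha_0$ with $\alpha_0 \geq \frac{1}{n(n+1)+1}$. For $n \geq 2$ we have $n(n+1)+1 \geq 7 > 3$, so the interval $[\frac{1}{n(n+1)+1},\,\frac{1}{3})$ is nonempty and such an $\alpha_0$ can be fixed. Given any target $\alpha < \frac{1}{n(n+1)+1}$ and any $\epsilon > 0$, Theorem \ref{t:isometricextend} then produces a $C^{1,\alpha}$ isometric immersion $\bar u:\overline{\Omega}\to\R^{n+1}$ with $\|\bar u - u\|_{C^0(\Omega)} < \epsilon$.

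The one point that must be checked is that $\bar u = f$ on $\Sigma$, not merely $\bar u \approx u = f$. This is precisely the feature emphasised in the discussion following Theorem \ref{t:isometricextend}: the modified convex-integration scheme of Section \ref{s:iteration} is arranged so that each perturbation vanishes on the metric-error zero set $\{\rho=0\}$, which contains $\Sigma$. Hence $\bar u\equiv u\equiv f$ on $\Sigma$ is built into the construction and no further work is needed here. The genuine obstacle is not this corollary but the two theorems behind it, in particular the need to modify the mollification step and to perform a level-set decomposition of $\overline{\Omega}\setminus\Sigma$ in such a way that the datum on $\Sigma$ is preserved throughout the iteration.
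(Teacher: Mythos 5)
Your proposal is correct and matches the paper's own (one-line) derivation: the paper simply combines Theorem \ref{t:shortextend} with Theorem \ref{t:isometricextend}, exactly as you do. Your extra observation — that the literal statement of Theorem \ref{t:isometricextend} only gives $C^0$-closeness, so one must invoke the fact that the iteration leaves $u$ unchanged on $\{\rho=0\}\supset\Sigma$ to conclude $\bar u=f$ on $\Sigma$ — is a valid and worthwhile point that the paper glosses over.
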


We expect that in the case $n=2$ the techniques of \cite{DIS} can be adapted to construct $C^{1, \alpha}$ isometric extensions for any $\alpha<1/5$ (rather than just $\alpha<1/7$). 

\section{Preliminaries}\label{s:preliminary}
In this section we introduce some notation, function spaces and basic lemmas. For a function $f:\Omega\to\R^q$ and for a multi-index $\beta$ the H\"older norms are defined as follows:
\begin{equation*}
\|f\|_0=\sup_{\Omega}f, ~~\|f\|_m=\sum_{j=0}^m\max_{|\beta|=j}\|\partial^\beta f\|_0,
\end{equation*}
and
\begin{equation*}
[f]_{\alpha}=\sup_{x\neq y}\frac{|f(x)-f(y)|}{|x-y|^\alpha},~~[f]_{m+\alpha}=\max_{|\beta|=m}\sup_{x\neq y}\frac{ |\partial^\beta f(x)-\partial^\beta f(y)|}{|x-y|^\alpha}, 0<\alpha\leq1.
\end{equation*}
Then the H\"older norms are given as
$$\|f\|_{m+\alpha}=\|f\|_m+[f]_{m+\alpha}. $$
We recall the standard interpolation inequality
$$
[f]_r\leq C\|f\|_0^{1-\frac{r}{s}}[f]_s^{\frac{r}{s}}
$$
for $s>r\geq0$ and the approximation of H\"older functions by smooth functions:
\begin{lemma}\label{l:mollification}
For any $r, s\geq0,$ and $0<\alpha\leq1,$ we have
\begin{align*}
&[f*\varphi_l]_{r+s}\leq Cl^{-s}[f]_r,\\
&\|f-f*\varphi_l\|_r\leq Cl^{1-r}[f]_1, \text{ if } 0\leq r\leq1,\\
&\|(fg)*\varphi_l-(f*\varphi_l)(g*\varphi_l)\|_r\leq Cl^{2\alpha-r}\|f\|_\alpha\|g\|_\alpha,
\end{align*}
with constant $C$ depending only on $s, r, \alpha, \varphi.$
\end{lemma}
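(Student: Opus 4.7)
The plan is to establish each of the three estimates separately by standard mollification arguments, exploiting the distributive property of convolution and the scaling behavior of $\varphi_l$.

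For the first estimate $[f*\varphi_l]_{r+s}\leq Cl^{-s}[f]_r$, I would use that differentiation passes through convolution, so $\partial^\gamma(f*\varphi_l)=(\partial^{\gamma_1}f)*(\partial^{\gamma_2}\varphi_l)$ for any admissible split $\gamma=\gamma_1+\gamma_2$. When the target order $r+s$ exceeds the available regularity of $f$, one places $\lfloor r\rfloor$ derivatives on $f$ and pushes the remaining derivatives onto $\varphi_l$, then invokes Young's inequality together with the scaling $\|\partial^\gamma\varphi_l\|_{L^1}\leq Cl^{-|\gamma|}$. For the fractional Hölder piece, bound a typical difference $\partial^\gamma(f*\varphi_l)(x)-\partial^\gamma(f*\varphi_l)(y)$ by splitting into the cases $|x-y|\leq l$ and $|x-y|>l$, handling each by an appropriate redistribution of derivatives between $f$ and $\varphi_l$.

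For the second estimate, I would treat the two endpoint cases directly and then interpolate. At $r=0$, writing
\[
f(x)-f*\varphi_l(x)=\int\bigl(f(x)-f(x-y)\bigr)\varphi_l(y)\,dy
\]
and using $|f(x)-f(x-y)|\leq [f]_1|y|\leq Cl[f]_1$ on $\supp\varphi_l$ yields $\|f-f*\varphi_l\|_0\leq Cl[f]_1$. At $r=1$, the bound $\|\nabla(f-f*\varphi_l)\|_0\leq 2[f]_1$ is immediate. The intermediate case $0<r<1$ then follows from the standard interpolation inequality $[h]_r\leq C\|h\|_0^{1-r}[h]_1^r$ applied to $h=f-f*\varphi_l$, together with the assumption $l\leq 1$.

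For the commutator estimate, the starting point is the identity
\[
(fg)*\varphi_l(x)-(f*\varphi_l)(x)(g*\varphi_l)(x) = \int\bigl(f(x-y)-f*\varphi_l(x)\bigr)\bigl(g(x-y)-g*\varphi_l(x)\bigr)\varphi_l(y)\,dy,
\]
obtained by expanding the product on the right and using $\int\varphi_l=1$ to cancel the cross terms. The pointwise bounds $|f(x-y)-f*\varphi_l(x)|\leq Cl^\alpha\|f\|_\alpha$ for $y\in\supp\varphi_l$, and the analogous one for $g$, immediately yield the $C^0$ estimate $\|\cdot\|_0\leq Cl^{2\alpha}\|f\|_\alpha\|g\|_\alpha$. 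For the full $C^r$ bound I would differentiate under the integral and transfer the $x$-derivatives on $f(x-y),g(x-y)$ into $y$-derivatives, then integrate by parts to move them onto $\varphi_l(y)$; each such transfer contributes a factor $l^{-1}$ while preserving the Hölder moduli of $f$ and $g$, producing the claimed $l^{2\alpha-r}$ scaling. The main subtlety is the commutator estimate for fractional $r$: the cleanest route is to prove it first at integer orders $r\in\mathbb{N}$ via the derivative redistribution just described, and then interpolate with the $C^0$ bound using the interpolation inequality recalled above.
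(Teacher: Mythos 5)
The paper itself does not prove this lemma (it is quoted as standard, with references to \cite{CDS12,DS14,DIS}), so the comparison is with the standard arguments there; your overall strategy (direct kernel estimates, the variance-type identity for the commutator, integer orders plus interpolation) is indeed the standard one, and your second estimate is complete and correct. However, two steps as you describe them have genuine gaps, both traceable to the same missing ingredient: the moment cancellation $\int\partial^{\gamma}\varphi_l\,dy=0$ for $|\gamma|\geq 1$. For the first estimate with $r\notin\mathbb{N}$, writing $r=m+\beta$ with $0<\beta<1$, the splitting ``$m$ derivatives on $f$, the rest on $\varphi_l$'' followed by Young's inequality gives a bound of the form $\|\partial^{m}f\|_0\,l^{-(|\gamma_2|+\beta')}$, which is expressed through $\|f\|_m$ rather than the seminorm $[f]_r$ and carries the wrong power of $l$ (worse by $l^{-\beta}$). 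The statement cannot be reached this way: e.g.\ a polynomial of degree $m$ with huge coefficients has $[f]_r=0$, so no bound via $\|\partial^m f\|_0$ can be converted into the claim. The correct step is to use that $\partial^{\gamma_2}\varphi_l$ has zero mean, so $\partial^{\gamma_1}f*\partial^{\gamma_2}\varphi_l(x)=\int\bigl(\partial^{\gamma_1}f(x-y)-\partial^{\gamma_1}f(x)\bigr)\partial^{\gamma_2}\varphi_l(y)\,dy$, which produces exactly $[f]_r\,l^{\beta-|\gamma_2|}$; combined with your (correct) two-case argument for the fractional modulus this yields $l^{-s}[f]_r$.

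For the commutator estimate your identity and the resulting $C^0$ bound are correct, but the differentiation step does not close as described: after converting $\partial_{x_i}f(x-y)$ into $-\partial_{y_i}$ and integrating by parts in $y$, the derivative also falls on the factor $g(x-y)-g*\varphi_l(x)$, which is only $C^\alpha$, so you are back to differentiating a non-differentiable function (and the same happens symmetrically for $f$). The standard repair is to differentiate the commutator in its original form, $\partial^{\gamma}\bigl[(fg)*\varphi_l\bigr]=(fg)*\partial^{\gamma}\varphi_l$ and $\partial^{\gamma}\bigl[(f*\varphi_l)(g*\varphi_l)\bigr]=\sum_{\gamma_1+\gamma_2=\gamma}c_{\gamma_1\gamma_2}(f*\partial^{\gamma_1}\varphi_l)(g*\partial^{\gamma_2}\varphi_l)$, and then to add and subtract $f(x)$, $g(x)$ using the same moment cancellation: $(fg)*\partial^{\gamma}\varphi_l(x)=\int\bigl(f(x-y)-f(x)\bigr)\bigl(g(x-y)-g(x)\bigr)\partial^{\gamma}\varphi_l(y)\,dy+f(x)\,g*\partial^{\gamma}\varphi_l(x)+g(x)\,f*\partial^{\gamma}\varphi_l(x)$, while $f*\partial^{\gamma_1}\varphi_l=(f-f(x))*\partial^{\gamma_1}\varphi_l$ for $|\gamma_1|\geq1$. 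Each factor then retains its gain $l^{\alpha}$, every term is bounded by $C\,l^{2\alpha-|\gamma|}\|f\|_\alpha\|g\|_\alpha$, and your final interpolation step (integer orders against the $C^0$ bound) then correctly handles fractional $r$. With these two corrections your outline becomes the standard proof; as written, the key cancellation mechanism that makes the stated powers of $l$ and the seminorm $[f]_r$ appear is missing.
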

In the above $\varphi_l$ is a standard mollifying kernel at length-scale $l>0$.
Other properties about H\"older norm can be found in references such as \cite{CDS12,DS14,DIS}. 

For an $n\times n$ matrix $P$ we will use the operator norm as defined by
$$
|P|:=\sup_{\xi\in S^{n-1}}|P\xi|.
$$
The $n\times n$ identity matrix is denoted by $\textrm{Id}$.
We recall the following lemmas. The first one is the decomposition of a metric into a sum of primitive metrics \cite{Nash54}, where we refer to the form used in \cite{CDS12, DIS}.
 
\begin{lemma}\label{l:deco}
For any $n\geq 2$ there exists a geometric constant $r_0>0$, vectors $\nu_1,\dots,\nu_{n_*}\in S^{n-1}$ and smooth functions $a_k\in C^\infty(B_{r_0}(Id))$ such that, for any positive definite matrix $P\in \R^{n\times n}$ with
\begin{equation}\label{e:deco}
|P-Id|\leq r_0,
\end{equation}
the identity
$$
P=\sum_{k=1}^{n_*}a_k^2(P)\nu_k\otimes\nu_k
$$
holds.
\end{lemma}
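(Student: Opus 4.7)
The plan is to reduce the statement to a purely linear-algebraic fact together with a smooth inversion. The space $\mathrm{Sym}(n)$ of symmetric $n\times n$ matrices has dimension exactly $n_*=n(n+1)/2$, and the set of symmetric rank-one matrices $\{\nu\otimes\nu:\nu\in S^{n-1}\}$ spans it (e.g.\ via spectral decomposition, or via the identity $e_i\otimes e_j+e_j\otimes e_i=\tfrac{1}{2}\bigl((e_i+e_j)\otimes(e_i+e_j)-(e_i-e_j)\otimes(e_i-e_j)\bigr)$). Accordingly, I would choose $n_*$ unit vectors $\nu_1,\dots,\nu_{n_*}\in S^{n-1}$ such that the rank-one matrices $\{\nu_k\otimes\nu_k\}_{k=1}^{n_*}$ form a basis of $\mathrm{Sym}(n)$.

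The key additional requirement is to select the $\nu_k$ so that in the unique expansion
$$
Id=\sum_{k=1}^{n_*}c_k\,\nu_k\otimes\nu_k
$$
every coefficient $c_k$ is strictly positive. This is possible because $Id$ lies in the interior of the convex cone of positive semidefinite matrices, which is precisely the convex cone generated by $\{\nu\otimes\nu:\nu\in S^{n-1}\}$; by Carath\'eodory's theorem, $Id$ can be written as a strictly positive combination of finitely many such rank-one matrices. Both of the conditions ``$\{\nu_k\otimes\nu_k\}$ is a basis of $\mathrm{Sym}(n)$'' and ``the expansion coefficients of $Id$ are strictly positive'' are open conditions on the $n_*$-tuple $(\nu_1,\dots,\nu_{n_*})\in (S^{n-1})^{n_*}$, so a small perturbation of any initial Carath\'eodory decomposition yields directions satisfying both simultaneously.

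Once the $\nu_k$ are fixed, the map $\Phi:\R^{n_*}\to\mathrm{Sym}(n)$, $(c_1,\dots,c_{n_*})\mapsto\sum_k c_k\,\nu_k\otimes\nu_k$, is a linear isomorphism. Its inverse provides smooth (in fact, linear) scalar-valued functions $c_k:\mathrm{Sym}(n)\to\R$ with $c_k(Id)>0$. By continuity there exists $r_0>0$ such that $c_k(P)>0$ for every symmetric $P$ with $|P-Id|\leq r_0$. Setting $a_k(P):=\sqrt{c_k(P)}$ then gives smooth functions on $B_{r_0}(Id)$ (the square root being smooth away from zero), and by construction
$$
P=\sum_{k=1}^{n_*}c_k(P)\,\nu_k\otimes\nu_k=\sum_{k=1}^{n_*}a_k(P)^2\,\nu_k\otimes\nu_k,
$$
as required. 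Note that positivity of $P$ is never directly invoked: positivity of the $a_k^2(P)$ is inherited from that of the $c_k(Id)$ by continuity of the linear functionals $c_k(\cdot)$.

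The principal (and really only) delicate point is securing the simultaneous linear independence of $\{\nu_k\otimes\nu_k\}$ together with strict positivity of the coefficients $c_k(Id)$; once this initial choice is in place, the rest of the argument is a routine application of the open mapping / continuity property of linear isomorphisms and smoothness of the square root on $(0,\infty)$.
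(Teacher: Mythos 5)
Your overall strategy is the standard one (note that the paper itself offers no proof of this lemma: it is quoted from \cite{Nash54} in the form used in \cite{CDS12,DIS}), and the second half of your argument is correct and complete: once unit vectors $\nu_1,\dots,\nu_{n_*}$ are found such that $\{\nu_k\otimes\nu_k\}$ is a basis of $\mathrm{Sym}(n)$ \emph{and} the coefficients of $Id$ in this basis are strictly positive, the coefficient functionals $c_k$ are linear, remain positive on a closed ball around $Id$, and $a_k=\sqrt{c_k}$ is smooth there. The gap is in the one step you yourself flag as delicate: the existence of such a configuration. Carath\'eodory in the PSD cone gives $Id$ as a nonnegative combination of \emph{at most} $n_*$ linearly independent rank-one projections, but the decompositions it produces (e.g.\ the spectral one, $Id=\sum_{i=1}^n e_i\otimes e_i$) involve only $n<n_*$ of them. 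To speak of ``the expansion coefficients of $Id$'' you must first complete such a family to a basis of $\mathrm{Sym}(n)$, and by uniqueness of basis expansions every added direction then carries coefficient exactly $0$; a small perturbation leaves those coefficients near $0$ with uncontrolled sign, so the open condition ``all $c_k>0$'' has no point from which to propagate. Concretely, for $n=2$ take $\nu_1=e_1$, $\nu_2=e_2$, $\nu_3=(e_1+e_2)/\sqrt2$: this is a basis of $\mathrm{Sym}(2)$, the off-diagonal entry forces $c_3(Id)=0$, and rotating $\nu_1$ to $(\cos\theta,\sin\theta)$ yields $c_3=-c_1\sin(2\theta)<0$ for small $\theta>0$. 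One genuinely needs a ``large'' move to a good configuration, e.g.\ the three unit vectors at angles $0,\pi/3,2\pi/3$, for which $\sum_k\nu_k\otimes\nu_k=\tfrac32 Id$ and the projections are independent, so $c_k(Id)=2/3$.

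The statement is of course true, but establishing the existence of the configuration in general dimension is exactly the nontrivial content of the cited lemma. A workable route: start from the $n^2$ unit vectors $\{e_i\}\cup\{(e_i\pm e_j)/\sqrt2\}_{i<j}$, whose projections span $\mathrm{Sym}(n)$ and satisfy $\sum_\eta \eta\otimes\eta=n\,Id$, so that $Id$ is a combination of a \emph{spanning} family with all coefficients equal to $1/n>0$; then reduce to $n_*$ linearly independent directions by a Carath\'eodory-type exchange that removes one dependent element at a time while preserving both the spanning property and strict positivity of the coefficients (this reduction requires some care with ties and is where the actual work lies). Alternatively, simply quote Lemma~1 of \cite{CDS12}, as the paper does. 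As written, your argument correctly reduces the lemma to the existence of the initial configuration but does not establish that existence.
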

Here we recall that $n_*=\frac{1}{2}n(n+1)$.
Of course we may assume without loss of generality that $r_0<1$.
For future reference we fix some radii $0<r_1<r_2<r_0$ such that
\begin{equation}\label{e:radius}
r_1\leq\frac{r_2}{5}\leq\frac{1}{25}r_0,
\end{equation}
with $r_0$ from Lemma \ref{l:deco}.

Finally, we recall the following lemma, which describes the profile of the building block of convex integration (the corrugation) for the isometric immersion problem, from \cite{CDS12}, see also \cite{HunWas16}.
\begin{lemma}\label{l:Gamma}
There exist some $\delta_*>0$ and a pair of functions $(\Gamma_1, \Gamma_2)\in C^{\infty}([0, \delta_*]\times\R, \R^2)$ such that $(\Gamma_1, \Gamma_2)(s, t)=(\Gamma_1, \Gamma_2)(s, t+2\pi)$ and
\begin{equation*}
\begin{split}
&(1+\partial_t\Gamma_1)^2+(\partial_t\Gamma_2)^2=1+s^2;~~\\
&\|\partial_s\partial_t^k\Gamma_1(s, t)\|_0+\|\partial_t^k(\Gamma_1, \Gamma_2)(s, t)\|_0\leq C(k)s, \text{ for } k\geq0.
\end{split}
\end{equation*}
\end{lemma}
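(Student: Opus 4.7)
The plan is to use the classical Nash--Bessel corrugation construction, as in \cite{CDS12} (see also \cite{HunWas16}). The first identity $(1+\partial_t\Gamma_1)^2+(\partial_t\Gamma_2)^2=1+s^2$ says exactly that the plane curve $t\mapsto(t+\Gamma_1(s,t),\Gamma_2(s,t))$ has speed $\sqrt{1+s^2}$, so I would parametrize
\[
\partial_t\Gamma_1(s,t)=\sqrt{1+s^2}\cos\theta(s,t)-1,\qquad \partial_t\Gamma_2(s,t)=\sqrt{1+s^2}\sin\theta(s,t),
\]
for an appropriate phase $\theta$. This makes the first identity automatic, and the task becomes choosing $\theta$ so that both $\partial_t\Gamma_j$ have zero mean on $[0,2\pi]$, which is the condition needed to produce $2\pi$-periodic primitives $\Gamma_j$.

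I would then make the standard ansatz $\theta(s,t)=\lambda(s)\sin t$. Oddness of $\sin(\lambda\sin\tau)$ in $\tau$ makes the mean-zero condition for $\partial_t\Gamma_2$ automatic, while the mean-zero condition for $\partial_t\Gamma_1$ becomes the scalar equation $\sqrt{1+s^2}\,J_0(\lambda(s))=1$, where $J_0$ denotes the Bessel function of the first kind. Because $J_0(\lambda)=1-\lambda^2/4+O(\lambda^4)$ is even in $\lambda$, one has $J_0'(0)=0$ and the implicit function theorem cannot be applied directly; instead I would rewrite the equation as $F(\lambda^2)=1+s^2$ with $F(u):=J_0(\sqrt{u})^{-2}$, which is smooth near $u=0$ with $F(0)=1$ and $F'(0)=1/2$. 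The implicit function theorem then produces a unique smooth solution $u=u(s^2)$ near $0$ with $u(s^2)=2s^2+O(s^4)$, whose positive square root yields a smooth odd function $\lambda(s)=\sqrt{2}\,s+O(s^3)$ on $[0,\delta_*]$ for some $\delta_*>0$.

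Setting $\Gamma_j(s,t):=\int_0^t\partial_t\Gamma_j(s,\tau)\,d\tau$ gives smooth functions on $[0,\delta_*]\times\R$ that are $2\pi$-periodic in $t$ by construction. The remaining bounds I would verify by parity in $s$ (after extending $\lambda$ smoothly to $(-\delta_*,\delta_*)$ as an odd function): since $\lambda$ is odd, $\sin(\lambda\sin t)$ is odd and $\cos(\lambda\sin t)$ is even in $s$, so $\partial_t^k\Gamma_2$ is odd in $s$ for every $k\geq 0$ and hence $\|\partial_t^k\Gamma_2\|_0\leq C(k)\,s$; similarly $\partial_t^k\Gamma_1$ is even in $s$ and vanishes at $s=0$ for every $k\geq 0$, yielding $\|\partial_t^k\Gamma_1\|_0\leq C(k)\,s^2\leq C(k)\,\delta_* s$. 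An extra $s$-differentiation turns an even function into an odd one, so $\partial_s\partial_t^k\Gamma_1$ vanishes at $s=0$, giving the required $\|\partial_s\partial_t^k\Gamma_1\|_0\leq C(k)\,s$ on $[0,\delta_*]$.

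The only mildly delicate step is the degeneracy $J_0'(0)=0$, which forces the inversion to be done in the squared variable $u=\lambda^2$; after that, the construction is completely explicit and the estimates follow by elementary parity and Taylor expansion, so I do not anticipate any serious obstacle beyond book-keeping.
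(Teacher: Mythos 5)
Your construction is correct, and it is precisely the standard Kuiper/Bessel-function corrugation from the references \cite{CDS12,HunWas16,SzLecturenotes} that the paper cites for this lemma (the paper itself gives no proof). The two delicate points — inverting $\sqrt{1+s^2}\,J_0(\lambda)=1$ in the squared variable $u=\lambda^2$ to get a smooth odd $\lambda(s)=\sqrt{2}\,s+O(s^3)$, and deducing the linear-in-$s$ bounds from parity of $\Gamma_1,\Gamma_2$ in $s$ — are both handled correctly.
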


\section{Main Iteration Propositions}\label{s:iteration}
In this section, we state and prove our main iteration propositions which form the building block of the convex integration iteration. We start by recalling how to ``add'' a primitive metric - this is called a ``step'' in the terminology of \cite{Nash54}, see also Proposition 2 in \cite{CDS12}.


\begin{proposition}\label{p:step} {\bf[Step]}
Let $u\in C^{2}(\Omega, \R^{n+1})$ be an immersion, $\nu\in S^{n-1}$ and $a\in C^{2}(\Omega)$. Assume that
\begin{align}
\frac{1}{\gamma}\textrm{Id}\leq\nabla u^T&\nabla u\leq\gamma \textrm{Id}\quad\textrm{ in }\Omega\label{e:step-1a}\\
\quad \|u\|_{2}&\leq M\delta^{1/2}\theta,\label{e:step-2a}\\
\|a\|_0\leq(\tfrac{1}{2}\gamma\eps)^{1/2},\quad \|a\|_1&\leq M\eps^{1/2}\theta, \quad \|a\|_2\leq M\eps^{1/2}\theta\tilde\theta,\label{e:step-3a}
\end{align}
for some $M,\gamma\geq 1$, $\eps\leq\delta\leq 1$ and $\theta\leq\tilde\theta$.
There exists a constant $c_0=c_0(M,\gamma)$ such that, for any 
\begin{equation}\label{e:constraint-on-lambda}
\lambda\geq c_0\frac{\delta^{1/2}}{\eps^{1/2}}\tilde\theta,
\end{equation}
there exists an immersion $v\in C^{2}(\Omega, \R^{n+1})$ such that 
\begin{align}
\frac{1}{2\gamma}\textrm{Id}\leq\nabla v^T&\nabla v\leq2\gamma \textrm{Id}\quad\textrm{ in }\Omega\label{e:step-0}\\
v&=u\textrm{ on }\Omega\setminus \textrm{supp }a,\label{e:step-1}\\
\|v-u\|_j&\leq \overline{M}\eps^{1/2}\lambda^{j-1}, j=0, 1, 2\label{e:step-2}\\
\|v\|_2&\leq \overline{M}\eps^{1/2}\lambda,\label{e:step-3}\\
\|\nabla v^T\nabla v-(\nabla u^T\nabla u+a^2\nu\otimes\nu)\|_j&\leq C(M,\gamma)\eps^{1/2}\delta^{1/2}\theta\lambda^{j-1}, j=0, 1.\label{e:step-4}
\end{align}
Here $\overline{M}$ is a constant depending only on $\gamma$.
\end{proposition}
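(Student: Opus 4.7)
The plan is to build $v$ using the Nash--Conti--De Lellis--Sz\'ekelyhidi corrugation ansatz adapted to the codimension one setting. First I would introduce the scalar $\omega(x)=\sqrt{\nu^T(\nabla u^T\nabla u)^{-1}\nu}$, the unit vector $\zeta(x)=\omega(x)^{-1}\nabla u(x)\,(\nabla u^T\nabla u)^{-1}\nu\in\R^{n+1}$, which lies in the image of $\nabla u$ and satisfies the key identity $\zeta\cdot\partial_j u=\omega^{-1}\nu_j$, and $\eta(x)\in\R^{n+1}$, a continuous choice of unit normal to the image of $\nabla u(x)$. Such $\eta$ exists globally on $\Omega$ since $u$ takes values in $\R^{n+1}$ and an orientation can be fixed. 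By \eqref{e:step-1a} we have $\gamma^{-1/2}\leq\omega\leq\gamma^{1/2}$, and the derivatives of $\omega,\zeta,\eta$ are controlled by $\lesssim M\delta^{1/2}\theta$ via \eqref{e:step-2a}. With $\Gamma_1,\Gamma_2$ from Lemma \ref{l:Gamma}, set
\[
v(x)=u(x)+\frac{1}{\lambda\omega(x)}\Bigl[\Gamma_1\bigl(a(x)\omega(x),\lambda\nu\cdot x\bigr)\zeta(x)+\Gamma_2\bigl(a(x)\omega(x),\lambda\nu\cdot x\bigr)\eta(x)\Bigr];
\]
the admissibility condition $a\omega\leq\delta_*$ holds once $\eps$ is small, which we may absorb into $c_0$.

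Next I would expand $\nabla v^T\nabla v$. Writing $t=\lambda\nu\cdot x$ and $s=a\omega$, the leading contribution to $\partial_j v$ is $\partial_j u+\omega^{-1}\nu_j\bigl(\partial_t\Gamma_1\,\zeta+\partial_t\Gamma_2\,\eta\bigr)$, the remainder being of size $O(\lambda^{-1})$ since every spatial derivative there has fallen on one of the slowly-varying factors $\omega,\zeta,\eta,s$. Using $\zeta\cdot\partial_i u=\omega^{-1}\nu_i$, $\eta\cdot\partial_i u=0$, the orthonormality of $\zeta,\eta$, and the identity $(1+\partial_t\Gamma_1)^2+(\partial_t\Gamma_2)^2=1+s^2$ from Lemma \ref{l:Gamma}, the principal part of the inner product becomes
\[
\partial_i v\cdot\partial_j v-\partial_i u\cdot\partial_j u=\frac{\nu_i\nu_j}{\omega^2}\bigl[2\partial_t\Gamma_1+(\partial_t\Gamma_1)^2+(\partial_t\Gamma_2)^2\bigr]=\frac{s^2}{\omega^2}\nu_i\nu_j=a^2\nu_i\nu_j.
\]
Thus $\nabla v^T\nabla v=\nabla u^T\nabla u+a^2\nu\otimes\nu+E$, where $E$ collects every term in which a spatial derivative acts on $\omega,\zeta,\eta$ or $s$ rather than on $t$.

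Now for the bookkeeping of \eqref{e:step-0}--\eqref{e:step-4}. The support property \eqref{e:step-1} is immediate from $\Gamma_i(0,\cdot)=0$, which follows from $\|\Gamma_i(s,\cdot)\|_0\leq Cs$ in Lemma \ref{l:Gamma}. For the quantitative bounds \eqref{e:step-2}--\eqref{e:step-3} I would differentiate the ansatz directly: each $\partial_t$ contributes a factor $\lambda$, each spatial derivative falling on a slow factor costs at worst $\tilde\theta$ via \eqref{e:step-2a}--\eqref{e:step-3a}, and the overall prefactor is $\|a\|_0\lesssim\eps^{1/2}$; the constraint \eqref{e:constraint-on-lambda} ensures $\tilde\theta\lesssim\lambda$, so $\lambda$ dominates. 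For \eqref{e:step-0}, combining the $j=0$ case of \eqref{e:step-4} with $\eps\leq\delta\leq1$ yields $\nabla v^T\nabla v=\nabla u^T\nabla u+a^2\nu\otimes\nu+O(\eps)$, whence $(2\gamma)^{-1}\mathrm{Id}\leq\nabla v^T\nabla v\leq 2\gamma\,\mathrm{Id}$ after enlarging $c_0$.

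The main obstacle is \eqref{e:step-4} at $j=1$: one must show that differentiating $E$ once preserves the bound $C\eps^{1/2}\delta^{1/2}\theta$. The most dangerous term is the one in which the additional derivative hits $\nabla s=\nabla(a\omega)$, producing the factor $\|a\|_2\leq M\eps^{1/2}\theta\tilde\theta$ from \eqref{e:step-3a}. Matching this against the target $\eps^{1/2}\delta^{1/2}\theta\lambda^0$ demands $\tilde\theta\leq C\lambda\delta^{1/2}\eps^{-1/2}$, which is precisely \eqref{e:constraint-on-lambda} once $c_0=c_0(M,\gamma)$ is chosen sufficiently large. All other contributions to $E$ involve $\nabla\omega,\nabla\zeta,\nabla\eta$, each of size $\lesssim\delta^{1/2}\theta$ by \eqref{e:step-2a}, and are handled analogously and are in fact strictly smaller. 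Assembling these estimates completes the proof.
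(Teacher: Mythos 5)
Your ansatz and the algebra behind it are essentially the paper's: your $\omega$ is the paper's $|\tilde\xi|$, your $\zeta,\eta$ are its $\xi,\zeta$ up to normalization, and the cancellation producing $a^2\nu\otimes\nu$ from the identity $(1+\partial_t\Gamma_1)^2+(\partial_t\Gamma_2)^2=1+s^2$ is correct. However, there is a genuine gap: you build the frame $(\omega,\zeta,\eta)$ directly from $\nabla u$, whereas the hypothesis only gives $u\in C^2$ with $\|u\|_2\leq M\delta^{1/2}\theta$. Your perturbation is then only $C^1$ (it is a smooth function of $\Gamma_i$, $a$ and the $C^1$ fields $\omega,\zeta,\eta$), so $v$ need not lie in $C^2$, and more importantly the estimates \eqref{e:step-2} with $j=2$, \eqref{e:step-3}, and \eqref{e:step-4} with $j=1$ all require \emph{second} derivatives of $\omega,\zeta,\eta$, i.e.\ third derivatives of $u$, which are neither assumed to exist nor quantitatively controlled. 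Your closing claim that the terms involving $\nabla\omega,\nabla\zeta,\nabla\eta$ are ``handled analogously and are in fact strictly smaller'' overlooks exactly this: differentiating the error term $\lambda^{-1}(\Gamma_1\nabla\zeta+\Gamma_2\nabla\eta)$ once produces $\lambda^{-1}\Gamma_i\nabla^2\zeta$, which cannot be bounded from the hypotheses as stated.

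This loss of derivative is why the paper first mollifies $u$ at length scale $\lambda^{-1}$ to obtain $\tilde u$ with $\|\tilde u\|_3\leq C(M)\delta^{1/2}\theta\lambda$, builds the frame and the amplitude $\tilde a=|\tilde\xi|a$ from $\tilde u$, and adds the corrugation to the original $u$. Then $\|(\xi,\zeta)\|_2\leq C\delta^{1/2}\theta\lambda$, and the extra factor $\lambda$ is absorbed by the prefactor $\lambda^{-1}$ in the error terms; the price is one additional contribution $\mathrm{sym}[(\nabla u-\nabla\tilde u)^T(\cdots)]$ to the metric error, controlled by $\|\nabla u-\nabla\tilde u\|_0\leq C\delta^{1/2}\theta\lambda^{-1}$. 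To repair your argument you must insert this regularization step; the rest of your computation then goes through as in the paper. (A second, smaller point: the admissibility requirement $\tilde a\leq\delta_*$ for Lemma \ref{l:Gamma} is a constraint on $\eps$ and $\gamma$, not something that can be ``absorbed into $c_0$,'' since $c_0$ only constrains $\lambda$.)
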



\begin{proof}
Let us fix $\lambda\geq c_0\frac{\delta^{1/2}}{\eps^{1/2}}\tilde\theta\geq\tilde\theta$, where $c_0$ will be chosen later during the proof. For the moment it suffices to assume that $c_0\geq 1$, so that, in particular, in the sequel we may assume the inequalities
\begin{equation}\label{e:step-parameter-inequalities}
\theta\leq  \frac{\delta^{1/2}}{\eps^{1/2}}\theta\leq \lambda.
\end{equation}
We start by regularizing $u$ on length-scale $\lambda^{-1}$ to get a smooth immersion $\tilde{u}$ satisfying
\begin{equation}\label{e:tildeu-C2}
\|\tilde{u}-u\|_1\leq C(M)\delta^{1/2}\theta\lambda^{-1} ,\quad \|\tilde{u}\|_{2}\leq C(M)\delta^{1/2}\theta,\quad \|\tilde{u}\|_{3}\leq C(M)\delta^{1/2}\theta\lambda.
\end{equation}
Observe that 
$$
\nabla\tilde u^T\nabla\tilde u=\nabla u^T\nabla u-(\nabla u-\nabla\tilde u)^T\nabla u-\nabla\tilde u^T(\nabla u-\nabla\tilde u),
$$
and hence
\begin{equation}\label{e:tildeu-C1}
\frac{1}{2\gamma}\textrm{Id}\leq \nabla\tilde u^T\nabla \tilde u\leq 2\gamma \textrm{Id},
\end{equation}
provided $\delta^{1/2}\theta\lambda^{-1}\leq c_2^{-1}$ for some $c_2=c_2(M,\gamma)$. Choosing $c_0\geq c_2$ in the inequality \eqref{e:constraint-on-lambda} constraining $\lambda$ will ensure this. Then it follows that $\nabla\tilde u^T\nabla \tilde u$ is invertible, and hence we can set
\begin{align*}
&\tilde{\xi}=\nabla \tilde{u}(\nabla \tilde{u}^T\nabla \tilde{u})^{-1}\nu,\quad \xi=\frac{\tilde{\xi}}{|\tilde{\xi}|^2},\\
&\tilde{\zeta}=*(\partial_1\tilde{u}\wedge\partial_2\tilde{u}\wedge\cdots\wedge\partial_n\tilde{u}),\quad 
\zeta=\frac{\tilde{\zeta}}{|\tilde{\xi}\|\tilde{\zeta}|},\\
&\tilde{a}=|\tilde{\xi}|a,
\end{align*}
where $*$ denotes the Hodge star operator. Observe that we then have, by construction, 
\begin{equation}\label{e:xizeta}
\nabla \tilde u^T\xi=\frac{1}{|\tilde\xi|^2}\nu,\quad \nabla\tilde u^T\zeta=0.
\end{equation}
It follows from \eqref{e:tildeu-C2}-\eqref{e:tildeu-C1} that 
\begin{equation}\label{e:xizeta-est}
\begin{split}
\|(\xi, \zeta)\|_0&\leq C(\gamma), \\
\|(\xi, \zeta)\|_1&\leq C(\gamma, M)\delta^{1/2}\theta, \\
\|(\xi, \zeta)\|_2&\leq C(\gamma, M)\delta^{1/2}\theta\lambda, 
\end{split}
\end{equation}
and
\begin{equation}\label{e:acm}
\begin{split}
\|\tilde{a}\|_0&\leq C(\gamma)\eps^{1/2},\\
\|\tilde{a}\|_1&\leq C(\gamma)(\|a\|_1\|~|\tilde{\xi}|~\|_0+\|a\|_0\|~|\tilde{\xi}|~\|_1)\\
&\leq C(\gamma, M)(\eps^{1/2}\theta+\eps^{1/2}\delta^{1/2}\theta)\\
&\leq C(\gamma, M)\eps^{1/2}\theta,\\
\|\tilde{a}\|_2&\leq C(\gamma)(\|a\|_2\|~|\tilde{\xi}|~\|_0+\|a\|_0\|~|\tilde{\xi}|~\|_2)\\
&\leq C(\gamma, M)(\eps^{1/2}\theta\tilde\theta+\eps^{1/2}\delta^{1/2}\theta\lambda)\\
&\leq C(\gamma, M)\eps^{1/2}\theta\lambda,
\end{split}
\end{equation}
where in the final inequality we have used that 
$\tilde\theta\leq \lambda$.
We set
$$
v=u+\frac{1}{\lambda}\bigl(\Gamma_1(\tilde{a}, \lambda x\cdot\nu)\xi+\Gamma_2(\tilde{a}, \lambda x\cdot\nu)\zeta\bigr).
$$
Using Lemma \ref{l:Gamma} we see that $v=u$ outside $\textrm{supp }a$, so that \eqref{e:step-1} holds.

 Next, note that
\begin{equation}\label{e:interu}
\begin{split}
\|v-u\|_{j}&\leq\frac{1}{\lambda}(\|\Gamma_1\xi\|_{j}+\|\Gamma_2\zeta\|_{j})\\
&\leq\frac{C}{\lambda}(\|\Gamma_1\|_{j}\|\xi\|_{0}+\|\Gamma_1\|_0\|\xi\|_{j}+
\|\Gamma_2\|_{j}\|\zeta\|_0+\|\Gamma_2\|_0\|\zeta\|_{j})
\end{split}
\end{equation}
for $j=0, 1, 2$. Therefore we need to estimate $\|\Gamma_i\|_{j}$ for  $i=1,2$ and $j=0, 1, 2$, where we refer, with a slight abuse of notation, to the $C^j$-norms in $x\in\Omega$ of the composition $x\mapsto \Gamma_i(\tilde{a}(x), \lambda x\cdot\nu)$. 
Using Lemma \ref{l:Gamma} we deduce for $i=1,2$
\begin{equation}\label{e:Gammaesti}
\begin{split}
\|\Gamma_i\|_0+\|\partial_t\Gamma_i\|_0&+\|\partial_t^2\Gamma_i\|_0\leq C\|\tilde{a}\|_0\leq C(\gamma)\eps^{1/2},\\
\|\Gamma_i\|_1&\leq \|\partial_t\Gamma_i\|_0\lambda+\|\partial_s\Gamma_i\|_0\|\nabla\tilde{a}\|_0\\
&\leq C(\gamma)\eps^{1/2}\lambda+C(M, \gamma)\eps^{1/2}\theta\\
&\leq C(\gamma)\eps^{1/2}\lambda,\\
\|\partial_t\Gamma_i\|_1&\leq \|\partial_t^2\Gamma_i\|_0\lambda+\|\partial_s\partial_t\Gamma_i\|_0\|\nabla\tilde{a}\|_0\\
&\leq C(\gamma)\eps^{1/2}\lambda,
\end{split}
\end{equation}
where we have used that $\lambda\geq C(M,\gamma)\theta$ - this can be ensured by an appropriate choice of $c_0$. 
Similarly, we also have
\begin{equation}\label{e:Gammaest12}
\begin{split}
\|\partial_s\Gamma_1\|_0&\leq C\|\tilde{a}\|_0\leq C(\gamma)\eps^{1/2},\\
\|\partial_s\Gamma_2\|_0&\leq C,\\
\|\partial_s\Gamma_1\|_1&\leq \|\partial_t\partial_s\Gamma_1\|_0\lambda+\|\partial_s^2\Gamma_1\|_0\|\nabla\tilde{a}\|_0\leq C(\gamma)\eps^{1/2}\lambda,\\
\|\partial_s\Gamma_2\|_1&\leq \|\partial_t\partial_s\Gamma_2\|_0\lambda+\|\partial_s^2\Gamma_2\|_0\|\nabla\tilde{a}\|_0\leq C(\gamma)\lambda.
\end{split}
\end{equation}
Consequently, we derive
\begin{align*}
\|v-u\|_0&\leq C(\gamma)\eps^{1/2}\lambda^{-1},\\
\|v-u\|_1&\leq C(\gamma) \eps^{1/2}+C(M,\gamma)\eps^{1/2}\delta^{1/2}\theta\lambda^{-1}\\
&\leq C(\gamma)\eps^{1/2},\\
\|v-u\|_2&\leq C(\gamma)\eps^{1/2}\lambda+C(M,\gamma)\eps^{1/2}\delta^{1/2}\theta\\
&\leq C(\gamma)\eps^{1/2}\lambda.
\end{align*}
Summarizing, we arrive at \eqref{e:step-2}, and since $\eps^{1/2}\lambda\geq M\delta^{1/2}\theta$, also at \eqref{e:step-3}.

\smallskip

Next, we derive estimates on the metric error. We calculate:
\begin{align*}
\nabla v=&\nabla u+(\partial_t\Gamma_1\xi\otimes\nu+\partial_t\Gamma_2\zeta\otimes\nu)+
\frac{1}{\lambda}(\Gamma_1\nabla\xi+\Gamma_2\nabla\zeta)\\
&+\frac{1}{\lambda}(\partial_s\Gamma_1\xi\otimes\nabla\tilde{a}+\partial_s\Gamma_2\zeta\otimes\nabla\tilde{a})\\
=&\nabla u+A+E_1+E_2,
\end{align*}
where we have set
$$
A=\partial_t\Gamma_1\xi\otimes\nu+\partial_t\Gamma_2\zeta\otimes\nu,\quad  E_1=\frac{1}{\lambda}(\Gamma_1\nabla\xi+\Gamma_2\nabla\zeta)
$$
and $E_2=E_2^{(1)}+E_2^{(2)}$ with
$$
E_2^{(1)}=\frac{1}{\lambda}\partial_s\Gamma_1\xi\otimes\nabla\tilde{a},\quad
E_2^{(2)}=\frac{1}{\lambda}\partial_s\Gamma_2\zeta\otimes\nabla\tilde{a}.
$$
Using \eqref{e:xizeta} and Lemma \ref{l:Gamma}, we have
$$
\nabla \tilde{u}^TA+A^T\nabla\tilde{u}+A^TA=a^2\nu\otimes\nu
$$
and
$$
\nabla\tilde u^TE_2^{(2)}=0.
$$
Therefore we may write, using the notation $\textrm{sym}(B)=(B+B^T)/2$,
\begin{equation}\label{e:metricerror1}
\begin{split}
&\nabla  v^T\nabla v-(\nabla u^T\nabla u+a^2\nu\otimes\nu)=\textrm{sym}\left[(\nabla u-\nabla\tilde{u})^T(A+E_2^{(2)})\right]\\
+&\textrm{sym}\left[A^TE_2^{(2)}\right]
+\textrm{sym}\left[(\nabla u+A)^T(E_1+E_2^{(1)})\right]+(E_1+E_2)^T(E_1+E_2).
\end{split}
\end{equation}
Using the estimates \eqref{e:xizeta-est}, \eqref{e:acm}, \eqref{e:Gammaesti} and \eqref{e:Gammaest12} we obtain
\begin{align*}
\|A\|_0&\leq C(\gamma)\eps^{1/2},\\
\|E_1\|_0&\leq C(M,\gamma)\lambda^{-1}\eps^{1/2}\delta^{1/2}\theta,\\	
\|E_2^{(1)}\|_0&\leq C(M,\gamma)\lambda^{-1}\eps\theta,\\
\|E_2^{(2)}\|_0&\leq C(M,\gamma)\lambda^{-1}\eps^{1/2}\theta.
\end{align*}
Using that $\eps\leq \delta$ and $\lambda\geq \theta$, we deduce
\begin{equation}\label{e:metricerror0}
\|\nabla v^T\nabla v-(\nabla u^T\nabla u+a^2\nu\otimes\nu)\|_0\leq C(M, \gamma)\eps^{1/2}\delta^{1/2}\lambda^{-1}\theta.
\end{equation}
Similarly, using the Leibniz-rule we obtain
\begin{align*}
\|A\|_1&\leq C(\|\partial_t\Gamma_1\|_{1}\|\xi\|_0+\|\partial_t\Gamma_2\|_{1}\|\zeta\|_0+\|\partial_t\Gamma_1\|_{0}\|\xi\|_{1}+\|\partial_t\Gamma_2\|_{0}\|\zeta\|_{1}).\\
&\leq C(\gamma, M)(\eps^{1/2}\lambda+\eps^{1/2}\delta^{1/2}\theta)\\
&\leq C(\gamma, M)\eps^{1/2}\lambda,\\
\|E_1\|_1&\leq \frac{C}{\lambda}(\|\Gamma_1\|_{1}\|\nabla\xi\|_0+\|\Gamma_1\|_{0}\|\nabla\xi\|_1+\|\Gamma_2\|_{1}\|\nabla\zeta\|_0+\|\Gamma_2\|_{0}\|\nabla\zeta\|_1)\\
&\leq C(\gamma, M)\eps^{1/2}\delta^{1/2}\theta,\\	
\|E_2^{(1)}\|_1&\leq \frac{C}{\lambda}(\|\partial_s\Gamma_1\|_{1}\|\xi\|_0\|\nabla\tilde{a}\|_0
+\|\partial_s\Gamma_1\|_0\|\xi\|_{1}\|\nabla\tilde{a}\|_0+\cdots+\|\partial_s\Gamma_1\|_0\|\zeta\|_0\|\nabla\tilde{a}\|_{1})\\
&\leq\frac{C(\gamma, M)}{\lambda}(\eps\lambda\theta+\eps\delta^{1/2}\theta\tilde\theta)\\
&\leq C(\gamma, M)\eps\theta,\\
\|E_2^{(2)}\|_1&\leq \frac{C}{\lambda}(\|\partial_s\Gamma_2\|_{1}\|\xi\|_0\|\nabla\tilde{a}\|_0
+\|\partial_s\Gamma_2\|_0\|\xi\|_{1}\|\nabla\tilde{a}\|_0+\cdots+\|\partial_s\Gamma_2\|_0\|\zeta\|_0\|\nabla\tilde{a}\|_{1})\\
&\leq\frac{C(\gamma, M)}{\lambda}(\eps^{1/2}\lambda\theta+\eps^{1/2}\delta^{1/2}\theta\tilde\theta)\\
&\leq C(\gamma, M)\eps^{1/2}\theta.
\end{align*}
Differentiating \eqref{e:metricerror1}, collecting terms and using the inequalities \eqref{e:step-parameter-inequalities}
we deduce
\begin{equation*}
\|\nabla v^T\nabla v-(\nabla u^T\nabla u+a^2\nu\otimes\nu)\|_{1}\leq C(M,\gamma)\eps^{1/2}\delta^{1/2}\theta.
\end{equation*}
This concludes the verification of \eqref{e:step-4}.

Finally, we verify that $v$ is an immersion. From \eqref{e:metricerror0} it follows that
\begin{align*}
\|\nabla v^T\nabla v-(\nabla u^T\nabla u+a^2\nu\otimes\nu)\|_0\leq \frac{1}{2\gamma},
\end{align*}
provided we choose $c_0\geq 2\gamma C(M,\gamma)$. Using \eqref{e:step-3a} and $\eps\leq 1$ we observe
$$0\leq a^2\nu\otimes\nu\leq \frac{\gamma}{2}\textrm{Id},$$ so that
from \eqref{e:step-1a} we readily deduce \eqref{e:step-0}.
This concludes the proof.

\end{proof}

Next we show how to utilize Proposition \ref{p:step} to ``add'' a term of the form 
$$
\sum_{k=1}^Na_k^2\nu_k\otimes \nu_k
$$
to the metric. This corresponds to a ``stage'' in the terminology of \cite{Nash54}, compare also with Proposition 4 in \cite{CDS12}. 

\begin{proposition}\label{p:stage}{\bf[Stage]}
Let $u\in C^{2}(\Omega, \R^{n+1})$ be an immersion, $\nu_k\in S^{n-1}$ and $a_k\in C^{2}(\Omega)$ for $k=1,\dots,N$. Assume that
\begin{align}
\frac{1}{\gamma}\textrm{Id}\leq\nabla u^T&\nabla u\leq\gamma \textrm{Id}\quad\textrm{ in }\Omega\label{e:stage-1a}\\
\quad \|u\|_{2}&\leq M\delta^{1/2}\theta,\label{e:stage-2a}\\
\|a_k\|_0\leq(\tfrac{1}{2}\gamma\eps)^{1/2},\quad \|a_k\|_1&\leq M\eps^{1/2}\theta, \quad \|a_k\|_2\leq M\eps^{1/2}\theta\tilde\theta,\label{e:stage-3a}
\end{align}
for some $M,\gamma\geq 1$, $\eps\leq\delta\leq 1$ and $\theta\leq\tilde\theta$.
Then there exists a constant $c_1=c_1(M,\gamma)$ such that for any $K\geq c_1\tilde\theta\theta^{-1}$
there exists an immersion $v\in C^2(\Omega, \R^{n+1})$ such that
\begin{align}
v&=u\textrm{ on }\Omega\setminus \bigcup_k\textrm{supp }a_k,\label{e:stage-1}\\
\|v-u\|_j&\leq \overline{M}\eps^{1/2}(\eps^{-1/2}\delta^{1/2}\theta K)^{j-1}\quad j=0,1,\label{e:stage-2}\\
\|v\|_2&\leq \overline{M}\delta^{1/2}\theta K^{N}.\label{e:stage-3}
\end{align}
Furthermore, there exists $\mathcal{E}\in C^1(\Omega,\R^{n\times n}_{sym})$ such that
$$
\nabla v^T\nabla v=\nabla u^T\nabla u+\sum_{k=1}^N a_k^2\nu_k\otimes\nu_k+\mathcal{E}\quad\textrm{ in }\Omega
$$
with
\begin{equation}\label{e:stage-4}
\begin{split}
\|\mathcal{E}\|_0&\leq C(M,\gamma)\eps K^{-1}, \\
\|\mathcal{E}\|_1&\leq C(M,\gamma)\eps^{1/2}\delta^{1/2}\theta K^{N-1}\,.
\end{split}
\end{equation}
The constant $\overline{M}$ depends only on $\gamma$ and $N$.
\end{proposition}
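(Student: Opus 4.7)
The plan is to iterate Proposition \ref{p:step} exactly $N$ times, producing a sequence $u = u_0, u_1,\dots,u_N = v$, where $u_k$ arises from $u_{k-1}$ by applying Step with perturbation $a_k$, direction $\nu_k$, at a carefully chosen frequency $\lambda_k$. I would take the frequencies to grow geometrically, $\lambda_k = K\lambda_{k-1}$, with base frequency $\lambda_1 = \bar c(M,\gamma)\,(\delta^{1/2}/\eps^{1/2})\,\theta K$. The required estimates \eqref{e:stage-2}--\eqref{e:stage-4} then follow by summing the increments \eqref{e:step-2}--\eqref{e:step-3} and collecting the partial metric errors
\[
\mathcal{E}_k := \nabla u_k^T\nabla u_k - \nabla u_{k-1}^T\nabla u_{k-1} - a_k^2\nu_k\otimes\nu_k,
\]
controlled by \eqref{e:step-4}, into the total error $\mathcal{E} = \sum_{k=1}^N \mathcal{E}_k$. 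The support property \eqref{e:stage-1} propagates automatically from \eqref{e:step-1} at each iteration.

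The first task is to verify that the hypotheses of Step hold at each iteration with appropriate step-parameters. For $k=1$ I take $(\theta_1^{step},\tilde\theta_1^{step})=(\theta,\tilde\theta)$ directly from the assumptions, and the constraint \eqref{e:constraint-on-lambda} becomes $\lambda_1 \geq c_0(\delta^{1/2}/\eps^{1/2})\tilde\theta$, handled by $K\gtrsim \tilde\theta/\theta$. For $k\geq 2$, the $C^2$ bound \eqref{e:step-3} from the previous stage forces $\theta_k^{step}\sim\eps^{1/2}\lambda_{k-1}/\delta^{1/2}$; under the standing $K\geq c_1\tilde\theta/\theta$ this automatically exceeds $\tilde\theta$, so one may set $\tilde\theta_k^{step}=\theta_k^{step}$, and the hypotheses on $a_k$ are automatic since $\|a_k\|_2\leq M\eps^{1/2}\theta\tilde\theta\leq M\eps^{1/2}(\theta_k^{step})^2$. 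The constraint \eqref{e:constraint-on-lambda} at stage $k$ then simplifies to $\lambda_k\geq c_0\overline{M}\lambda_{k-1}$, which holds provided $K$ exceeds a constant depending only on $(M,\gamma)$. Absorbing both restrictions yields the threshold $c_1(M,\gamma)$ in the statement.

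Summing the estimates is then mostly bookkeeping. The $C^0$ increments $\overline{M}\eps^{1/2}\lambda_k^{-1}$ form a geometric series dominated by $\overline{M}\eps^{1/2}/\lambda_1\sim \eps/(\delta^{1/2}\theta K)$, giving \eqref{e:stage-2} for $j=0$; the $C^1$ increments are each bounded by $\overline{M}\eps^{1/2}$ and sum to $N\overline{M}\eps^{1/2}$, absorbed in a new $\overline{M}$, giving $j=1$; the $C^2$ bound is dominated by the final iteration, $\|v\|_2\leq\overline{M}\eps^{1/2}\lambda_N\sim\overline{M}\delta^{1/2}\theta K^N$. For the residual error, \eqref{e:step-4} evaluated with the chosen parameters yields $\|\mathcal{E}_k\|_0\leq C\eps K^{-1}$ at every stage (the $K^{-1}$ gain here crucially uses the geometric growth $\lambda_k/\lambda_{k-1}=K$), while $\|\mathcal{E}_k\|_1\leq C\eps^{1/2}\delta^{1/2}\theta K^{k-1}$ is dominated by $k=N$. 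The main obstacle is precisely this frequency-choice bookkeeping: the $\lambda_k$ must be tuned so that Step remains applicable at every stage \emph{and} the accumulated $C^0$ metric error still gains a factor $K^{-1}$ per iteration, while the final $c_1$ depends only on $(M,\gamma)$ (with $N$ absorbed into $\overline{M}$) rather than on the initial scales $\theta,\tilde\theta$ themselves.
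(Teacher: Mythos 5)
Your proposal is correct and follows essentially the same route as the paper: iterate Proposition \ref{p:step} $N$ times with geometrically growing frequencies $\lambda_k=K^{k-1}\lambda_1$, $\lambda_1\sim\theta K\delta^{1/2}\eps^{-1/2}$, re-reading the $C^2$ output of each step as the hypothesis $\|u_{k-1}\|_2\leq M\delta_k^{1/2}\theta_k$ for the next (the paper sets $\delta_k=\eps_k=\eps$, $\theta_k=\tilde\theta_k=\lambda_{k-1}$, which is the same reparametrization you describe up to relabeling), then summing the increments and partial errors. Your bookkeeping of the $C^0$, $C^1$, $C^2$ sums and of $\|\mathcal{E}_k\|_0\leq C\eps K^{-1}$, $\|\mathcal{E}_k\|_1\leq C\eps^{1/2}\delta^{1/2}\theta K^{k-1}$ matches the paper's.
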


\begin{proof}
Set $u_0=u$, $\gamma_1=\gamma$ and construct  $u_1$  by applying Proposition \ref{p:step} to $u_0$ and $a_1$ with constants 
$$
\eps_1=\eps,\quad\delta_1=\delta, \quad \lambda_1=\theta K\frac{\delta^{1/2}}{\eps^{1/2}}, \quad \theta_1=\theta,\quad \tilde\theta_1=\tilde\theta.
$$
Then conditions \eqref{e:step-1a}-\eqref{e:step-3a} are satisfied, and we may in addition choose $c_1$ so that 
\eqref{e:constraint-on-lambda} is fulfilled for $K\geq c_1\tilde\theta\theta^{-1}$. We obtain $u_1$ with the properties 
\begin{equation}\label{e:step1st}
\begin{split}
&\|u_1-u_0\|_j\leq M_1\eps^{1/2}\lambda_1^{j-1}, j=0, 1,2, \\
&\|u_1\|_2\leq M_1\eps^{1/2}\lambda_1,\\
&\|\nabla u_1^T\nabla u_1-(\nabla u_{1}^T\nabla u_{1}+a_1^2\nu_1\otimes\nu_1)\|_j\leq C(M,\gamma_1)\eps^{1/2}\delta^{1/2}\theta_1\lambda_1^{j-1}, j=0, 1,
\end{split}
\end{equation}
with $M_1$ depending only on $\gamma$. Note that this time $u_1$ and $a_2$ satisfy once again the assumptions  \eqref{e:step-1a}-\eqref{e:step-3a} of Proposition \ref{p:step} with 
$$
\eps_2=\delta_2=\eps,\quad \gamma_2=2\gamma_1,\quad \theta_2=\tilde\theta_2=\lambda_1,
$$
and hence we may again apply Proposition \ref{p:step} with $\lambda_2=K\lambda_1$. It is easy to verify that condition \eqref{e:constraint-on-lambda} is then valid. Thus, in this way we may successively apply Proposition \ref{p:step} to obtain $u_2, \cdots, u_{N}$  with constants 
$$
\eps_k=\delta_k=\eps,\quad \lambda_k=\lambda_1 K^{k-1}, \quad \theta_{k}=\tilde\theta_k=\lambda_{k-1},\quad \gamma_k=2\gamma_{k-1}.
$$
We conclude for $k=2,\dots,N$
\begin{equation}\label{e:stepkth}
\begin{split}
&\|u_{k}-u_{k-1}\|_j\leq \overline{M}\eps^{1/2}\lambda_k^{j-1}, j=0, 1,2,\\
&\|u_{k}\|_2\leq\overline{M}\eps^{1/2}\lambda_k,\\
&\|\nabla u_{k}^T\nabla u_{k}-(\nabla u_{k-1}^T\nabla u_{k-1}+a_k^2\nu_k\otimes\nu_k)\|_j\leq C(M,\gamma_k)\eps\lambda_{k-1}\lambda_k^{j-1}, j=0, 1.
\end{split}
\end{equation}
We obtain an immersion $v=u_{N}\in C^2(\Omega,\R^{n+1})$ satisfying, for $j=0,1,2$,
\begin{equation*}
\|v-u\|_j\leq\sum_{k=1}^{N}\|u_k-u_{k-1}\|_j\leq\overline{M}\sum_{k=1}^{N}\eps^{1/2}\lambda_k^{j-1}\,.
\end{equation*}
Hence \eqref{e:stage-2} easily follows, with a possibly larger constant $\overline{M}$. Moreover, we further deduce
\begin{align*}
\|v\|_{2}&\leq\|u\|_{2}+\|v-u\|_{2}\leq M\delta^{1/2}\theta+\overline{M}\eps^{1/2}\sum_{k=1}^{N}\lambda_k\\
&\leq M\delta^{1/2}\theta+C\overline{M}\delta^{1/2}\theta K^{N},
\end{align*}
so that \eqref{e:stage-3} follows. 

Set 
\begin{align*}
\mathcal{E}&=\nabla v^T\nabla v-\left(\nabla u^T\nabla u+\sum_{k=1}^Na_k^2\nu_k\otimes\nu_k\right)\\
&=\sum_{k=1}^{N}[\nabla u_k^T\nabla u_k-(\nabla u_{k-1}^T\nabla u_{k-1}+a_k^2\nu_k\otimes\nu_k)].
\end{align*}
Combining with \eqref{e:step1st} and \eqref{e:stepkth}  we obtain
\begin{align*}
\|\mathcal{E}\|_0&\leq C(M,\gamma)\eps^{1/2}\delta^{1/2}\theta\lambda_1^{-1}+\sum_{k=2}^{N}\eps\lambda_{k-1}\lambda_k^{-1}\\
&\leq C(M, \gamma)\eps K^{-1},\\
\|\mathcal{E}\|_1&\leq C(M,\gamma)\eps^{1/2}\delta^{1/2}\theta+\sum_{k=2}^{N}\eps\lambda_{k-1}\\
&\leq C(M, \gamma)\eps^{1/2}\delta^{1/2}\theta K^{N-1},
\end{align*}
which are  the desired estimates of the metric error \eqref{e:stage-4}.

\end{proof}

\bigskip
As a corollary, we can add a term of the form
$\rho^2(\textrm{Id}+G)$ with $|G|\leq r_0$ (c.f.~Lemma \ref{l:deco}) to the metric. 
\begin{corollary}\label{c:stage}
Let $u\in C^2(\Omega, \R^{n+1})$ be an immersion such that 
\begin{align*}
\frac{1}{\gamma}\textrm{Id}\leq \nabla u^T&\nabla u\leq\gamma \textrm{Id},\textrm{ in }\Omega\\
\|u\|_{2}&\leq M\delta^{1/2}\theta,
\end{align*}
and let $\rho\in C^1(\Omega)$, $G\in C^1(\Omega,\R^{n\times n}_{sym})$ 
such that
\begin{align*}
\|\rho\|_0&\leq (\tfrac12\gamma\eps)^{1/2}, \quad \|\rho\|_1\leq M\eps^{1/2}\theta,\\ 
\|G\|_0&\leq r_0,\quad \|G\|_1\leq M\theta
\end{align*}
with $\eps\leq\delta\leq 1$ and $M,\gamma\geq 1$. Then there exists a constant $K_0=K_0(M,\gamma)$ such that for any
$K\geq K_0$ there exists an immersion $v\in C^2(\Omega, \R^{n+1})$ such that
\begin{align}
v&=u\textrm{ on }\Omega\setminus \textrm{supp }\rho,\label{e:cstage-1}\\
\|v-u\|_j&\leq \overline{M}\eps^{1/2}(\eps^{-1/2}\delta^{1/2}\theta K)^{j-1}\quad j=0,1,\label{e:cstage-2}\\
\|v\|_2&\leq \overline{M}\delta^{1/2}\theta K^{n_*}.\label{e:cstage-3}
\end{align}
Furthermore, there exists $\mathcal{E}\in C^1(\Omega,\R^{n\times n}_{sym})$ such that
$$
\nabla v^T\nabla v=\nabla u^T\nabla u+\rho^2(\textrm{Id}+G)+\mathcal{E}\quad\textrm{ in }\Omega
$$
with
\begin{equation}\label{e:cstage-4}
\begin{split}
\|\mathcal{E}\|_0&\leq C(M,\gamma)\eps K^{-1}, \\
\|\mathcal{E}\|_1&\leq C(M,\gamma)\eps^{1/2}\delta^{1/2}\theta K^{n_*-1}\,.
\end{split}
\end{equation}
The constant $\overline{M}$ depends only on $\gamma$.
\end{corollary}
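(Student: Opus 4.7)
My plan is to reduce to Proposition~\ref{p:stage} via the Nash decomposition of Lemma~\ref{l:deco}. Since $|G(x)|\leq r_0$, Lemma~\ref{l:deco} supplies smooth functions $a_k$ on $B_{r_0}(\textrm{Id})$ so that
$$
\rho^2(\textrm{Id}+G) = \sum_{k=1}^{n_*} \bigl(\rho\, a_k(\textrm{Id}+G)\bigr)^2 \nu_k\otimes\nu_k.
$$
The natural coefficients $\rho\, a_k(\textrm{Id}+G)$ are only of class $C^1$, however, whereas Proposition~\ref{p:stage} requires $C^2$ regularity together with a $C^2$-bound of order $\eps^{1/2}\theta\tilde\theta$. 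To overcome this I would mollify $\rho$ and $G$ at some length-scale $\ell$ and work with
$$
\tilde a_k(x) := \rho_\ell(x)\, a_k\bigl(\textrm{Id}+G_\ell(x)\bigr),
$$
noting that mollification preserves the bound $|G_\ell|\leq r_0$, so that the decomposition remains applicable on the mollified data.

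The scale $\ell$ is dictated by three competing requirements. First, Lemma~\ref{l:mollification} together with the product and chain rule yields
$$
\|\tilde a_k\|_2 \leq C(M,\gamma)\eps^{1/2}\theta\bigl(\theta + \ell^{-1}\bigr),
$$
while $\|\tilde a_k\|_0$ and $\|\tilde a_k\|_1$ inherit the required bounds after absorbing the universal constants from Lemma~\ref{l:deco} into a larger constant $M'=M'(M,\gamma)$; this suggests $\tilde\theta\simeq \ell^{-1}$. Second, the constraint $K \geq c_1\tilde\theta\theta^{-1}$ in Proposition~\ref{p:stage} forces $\ell \gtrsim (K\theta)^{-1}$. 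Third, the mollification error in the metric, estimated below, is of order $\eps\theta\ell$, and we want it bounded by $\eps K^{-1}$, forcing $\ell \lesssim (K\theta)^{-1}$. The unique consistent choice is $\ell := (K\theta)^{-1}$ and $\tilde\theta := K\theta$. Taking $K_0 = K_0(M,\gamma)$ sufficiently large, Proposition~\ref{p:stage} then applies with $N=n_*$ and directly yields an immersion $v$ fulfilling \eqref{e:cstage-1}--\eqref{e:cstage-3}.

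For the error, I would decompose
$$
\mathcal{E} = \mathcal{E}_1 + \bigl(\rho_\ell^2(\textrm{Id}+G_\ell) - \rho^2(\textrm{Id}+G)\bigr),
$$
where $\mathcal{E}_1$ is the error produced by Proposition~\ref{p:stage}; the first term already satisfies the bounds \eqref{e:cstage-4} by \eqref{e:stage-4} with $N=n_*$. For the mollification remainder, expanding
$$
\rho_\ell^2(\textrm{Id}+G_\ell) - \rho^2(\textrm{Id}+G) = (\rho_\ell-\rho)(\rho_\ell+\rho)(\textrm{Id}+G_\ell) + \rho^2(G_\ell-G)
$$
and invoking Lemma~\ref{l:mollification} gives a $C^0$ bound of order $\eps\theta\ell = \eps K^{-1}$ and a $C^1$ bound of order $\eps\theta$; using $\eps\leq\delta$ and $K\geq 1$ (so that $(\eps/\delta)^{1/2}\leq 1\leq K^{n_*-1}$), both fit into \eqref{e:cstage-4}. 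The principal obstacle is precisely the simultaneous balancing of the three constraints on $\ell$ above; once $\ell = (K\theta)^{-1}$ is fixed, the remaining verifications reduce to routine applications of the product rule, the chain rule, and Lemma~\ref{l:mollification}. A minor technical point is that $\supp\tilde a_k$ may expand by $O(\ell)$ relative to $\supp\rho$, a standard artefact of mollification which can be absorbed by slight domain-shrinking and does not affect the subsequent iteration.
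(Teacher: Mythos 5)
Your proposal follows essentially the same route as the paper: mollify $\rho$ and $G$ at scale $\ell\simeq(K\theta)^{-1}$, decompose $\textrm{Id}+G_\ell$ via Lemma~\ref{l:deco}, apply Proposition~\ref{p:stage} with $N=n_*$ to the coefficients $\rho_\ell\,a_k(\textrm{Id}+G_\ell)$, and charge the mollification remainder $\rho_\ell^2(\textrm{Id}+G_\ell)-\rho^2(\textrm{Id}+G)$ to $\mathcal{E}$, with the same balancing $\eps\theta\ell\lesssim\eps K^{-1}$. The only cosmetic discrepancy is that the zeroth-order hypothesis $\|a_k\|_0\leq(\tfrac12\gamma\eps)^{1/2}$ of Proposition~\ref{p:stage} contains no factor of $M$, so the constant from Lemma~\ref{l:deco} must be absorbed by enlarging $\gamma$ (the paper replaces $\gamma$ by $2n\gamma$ via a trace bound) rather than by enlarging $M$.
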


\begin{proof}
Let us fix $K\geq K_0$ with $K_0=K_0(M,\gamma)$ as in Proposition \ref{p:stage}. We start by regularizing $\rho$ and $G$ at length-scale $\ell=\tilde\theta^{-1}$, with
$$
\tilde\theta=\frac{K\theta}{K_0}\geq \theta.
$$
We obtain
\begin{align*}
\|\tilde\rho\|_0\leq (\tfrac12\gamma\eps)^{1/2}, \quad \|\tilde\rho-\rho\|_0&\leq C(M)\eps^{1/2}\theta\ell,\quad \|\tilde \rho\|_j\leq C(M)\eps^{1/2}\theta\ell^{1-j},\\
\|\tilde G\|_0\leq r_0, \quad \|\tilde G-G\|_0&\leq C(M)\theta\ell,\quad \|\tilde G\|_j\leq C(M)\theta\ell^{1-j}
\end{align*}
for $j=1,2$. Let $h=\rho^2(\textrm{Id}+G)$ and $\tilde{h}=\tilde\rho^2(\textrm{Id}+\tilde G)$. Then 
\begin{equation}\label{e:metricerror-stage1}
\begin{split}	
\|\tilde h-h\|_0&\leq \|\tilde \rho^2-\rho^2\|_0+\|\tilde\rho^2\tilde G-\rho^2G\|_0\\
&\leq C(M, \gamma)\eps\theta \ell \leq K_0C(M, \gamma)\eps K^{-1},\\
\|\tilde h-h\|_1&\leq C(M, \gamma)\eps\theta\leq C(M, \gamma)\eps^{1/2}\delta^{1/2}\theta.
\end{split}
\end{equation}

From Lemma \ref{l:deco} we obtain, with $n_*=\frac{n(n+1)}{2}$,
\begin{equation*}
\textrm{Id}+\tilde G=\sum_{k=1}^{n_*}\tilde{a}_k^2\nu_k\otimes\nu_k,
\end{equation*}
where
$$
\|\tilde{a}_k\|_1\leq C(M)\theta,\quad \|\tilde{a}_k\|_2\leq C(M)\theta\tilde\theta.
$$ 
Define $a_k=\tilde \rho \tilde a_k$,
so that
\begin{equation}\label{e:decompstage}
\tilde h=\sum_{k=1}^{n_*}a_k^2\nu_k\otimes\nu_k,
\end{equation}
and $a_k$ satisfies
$$
\|{a}_k\|_1\leq C(M)\eps^{1/2}\theta,\quad \|{a}_k\|_2\leq C(M)\eps^{1/2}\theta\tilde\theta.
$$
Furthermore, by taking the trace of \eqref{e:decompstage} and using that $r_0\leq 1$ we deduce 
$$
\|a_k\|_0\leq \|\textrm{tr }\tilde h\|_0^{1/2}\leq \sqrt{2n}\|\tilde \rho\|_0\leq (n\gamma\eps)^{1/2}.
$$ 
Then, by replacing $\gamma$ with $2n\gamma$ we observe that the conditions of Proposition \ref{p:stage} are satisfied with $N=n_*$. Combining the conclusions \eqref{e:stage-1}-\eqref{e:stage-4} with \eqref{e:metricerror-stage1} we deduce the desired estimates \eqref{e:cstage-1}-\eqref{e:cstage-4}.

\end{proof}

\section{Proof of Theorem \ref{t:shortextend}}
In this section, we will prove Theorem \ref{t:shortextend} through Taylor expansion and applying one stage. As is well-known, for any point in $\Sigma$, there exists a neighbourhood in $\mathcal{M}$ admitting a geodesic coordinate system $(x_1,\dots,x_n)$ such that
 $\Sigma=\{x_n=0\}$ and the metric is of the form
$$
g=\sum_{i, j=1}^{n-1}g_{ij}dx^idx^j+(dx^n)^2.
$$
In the sequel we write $x=(x',x_n)$, so that in particular the given isometric immersion $f:\Sigma\to\R^{n+1}$ is locally a function $f=f(x')$. For simplicity and without loss of generality we may assume that, in these coordinates, the neighbourhood is given by $\{x:\,|x'|<1\textrm{ and }|x_n|<1\}$. The later construction is split into two steps.
\bigskip

\emph{Step 1. Initial short extension.} This step is same as \cite{HunWas16}. We make the \emph{ansatz} 
$$
u(x)=f(x')+f_1(x')x_n+f_2(x')x_n^2+f_3(x')x_n^3
$$
with $f_m(x'), m=1, 2, 3$ to be fixed. An easy calculation gives, for any $i\leq n-1$
\begin{align*}
\partial_iu(x)&=\partial_if(x')+\partial_if_1(x')x_n+\partial_if_2(x')x_n^2+\partial_if_3(x')x_n^3,\\
\partial_nu(x)&=f_1(x')+2f_2(x')x_n+3f_3(x')x_n^2,
\end{align*}
and hence, using that by assumption $f|\Sigma\to\R^{n+1}$ is isometric,
\begin{align*}
\partial_iu\cdot\partial_ju&=g_{ij}(x',0)+[\partial_if_1\cdot\partial_jf+\partial_jf_1\cdot\partial_if]x_n\\
&+[\partial_if_2\cdot\partial_jf+\partial_jf_2\cdot\partial_if+2\partial_if_1\cdot\partial_jf_1]x_n^2+o(x_n^2)\\
\partial_iu\cdot\partial_nu&=f_1\cdot\partial_if+(2f_2\cdot\partial_if+f_1\cdot\partial_if_1)x_n\\
&+[f_1\cdot \partial_if_2+2\partial_if_1\cdot f_2+3f_3\cdot\partial_if]x_n^2+o(x_n^2),\\
\partial_nu\cdot\partial_nu&=f_1\cdot f_1+4f_1\cdot f_2x_n+(6f_1\cdot f_3+4f_2\cdot f_2)x_n^2+o(x_n^2).
\end{align*}
Comparing the Taylor expansion of $g$
\begin{equation*}
g(x)=\left(              
  \begin{array}{cc}   
g_{ij}(x', 0)+\partial_ng_{ij}(x', 0)x_n+\frac{1}{2}\partial_n^2 g_{ij}(x', 0)x_n^2+o(x_n^2)
~&~0\\
0~&~1
  \end{array}
\right),
\end{equation*}
with the matrix $\nabla u^T\nabla u$, our aim is to choose $f_1$, $f_2$ and $f_3$ in such a way as to ensure that
$$g-\nabla u^T\nabla u=x_nP_1(x')+o(x_n) \text{ for }x_n>0, $$
where $P_1$ are uniformly positive definite tensors in $\Sigma$.
Thus we set 
$$
f_1=\mu,\quad f_2=-\mu,\quad f_3=0,
$$
where $\mu$ is the vectorfield satisfying \eqref{e:HW}, so that $\mu=\mu(x')$ satisfies
\begin{equation}\label{e:mu}
\mu\cdot\partial_if(x')=0,\quad |\mu|=1, \quad \mu\cdot\partial^2_{ij}f(x')-L_{ij}(x')>0,
\end{equation}
where the last inequality is understood in the sense of positive definite quadratic forms.
Since in our geodesic coordinate system $L_{ij}(x')=-\frac{1}{2}\partial_ng_{ij}(x', 0)$, our choice of $f_1,f_2,f_3$ implies
\begin{equation}\label{e:metricadapt1}
g-\nabla u^T\nabla u=x_n\left(           
  \begin{array}{cc}   
2f_1(x')\cdot\partial^2_{ij}f(x')+\partial_n g_{ij}(x', 0)~&~0\\
0~&~4
  \end{array}
\right)+O(x_n^2)
\end{equation}
For $d_0<1$ sufficiently small we see that \eqref{e:metricadapt1} is positive definite for all $0<x_n<d_0$. Therefore 
$$
\Omega=\{x:\,|x'|<1\textrm{ and }0<x_n<d_0\}
$$
is a one-sided neighbourhood of $\Sigma$ in which $u$ is strictly short and extends $f|_{\Sigma}$.

\bigskip

\emph{Step 2. Adapted short extension.} We shall utilize one stage of adding primitive metric errors to construct an adapted short immersion. First of all, the immersion $u:\Omega\to\R^{n+1}$ satisfies $u\in C^2(\Omega)$ with 
\begin{align*}
&\frac{1}{\gamma}\textrm{Id}\leq\nabla u^T\nabla u\leq\gamma \textrm{Id},\\
&\|u\|_{C^2(\Omega)}\leq M
\end{align*}
for some $\gamma,M>1$. Let 
$$
\rho^2=\frac{1}{n}\textrm{tr}(g-\nabla u^T\nabla u).
$$
From the construction in Step 1, in particular the expression \eqref{e:metricadapt1}, we deduce that there exists a constant $C\geq 1$ so that
for all $x\in\Omega$
\begin{equation}\label{e:adapted-rho}
\frac{1}{C}x_n^{1/2}\leq \rho(x)\leq Cx_n^{1/2},\quad |\nabla\rho(x)|\leq Cx_n^{-1/2},\quad |\nabla^2\rho(x)|\leq Cx_n^{-3/2}.
\end{equation}
Furthermore, there exists $\tau>0$ such that 
\begin{align*}
g-\nabla u^T\nabla u\geq 2\tau\rho^2\textrm{Id}.
\end{align*}
In particular, using Lemma 1 from \cite{Nash54} (see also Lemma 1.9 in \cite{SzLecturenotes}), we obtain the decomposition 
\begin{align*}
\frac{g-\nabla u^T\nabla u}{\rho^2}-\tau\textrm{Id}=\sum_{k=1}^N \bar{b}_k^2\varpi_k\otimes\varpi_k,
\end{align*}
for some $\varpi_k\in S^{n-1}$, $\bar{b}_k\in C^{\infty}(\Omega)$ and some integer $N$, with estimates of the form
\begin{equation}\label{e:bkbar}
\|\bar{b}_k\|_{C^j(\Omega)}\leq C(M)
\end{equation}
for $j=0,1,2$.
By setting $b_k=\bar{b}_k\rho$ we then deduce
$$
g-\nabla u^T\nabla u-\tau\rho^2\textrm{Id}=\sum_{k=1}^N b_k^2\varpi_k\otimes\varpi_k, 
$$
with estimates, for $j=0,1,2$ and $k=1,\dots,N$, 
\begin{equation}\label{e:bk}
|\nabla^jb_k(x)|\leq C(M)x_n^{1/2-j}\quad \textrm{ for }x\in\Omega. 
\end{equation}

Next, we define a Whitney-decomposition of the domain $\Omega$ as follows: Set $d_q=2^{-q}d_0$ for $q=1,2,\dots$ and define
$$
\Omega_q=\left\{x:\,|x'|<1\textrm{ and }x_n\in (d_{q+1},d_{q-1})\right\}.
$$
Moreover, let $\{\chi_q\}_{q}$ be a partition of unity on $\Omega$ subordinate to the decomposition $\Omega=\bigcup_{q=1}^\infty\Omega_q$. with the following standard properties:
\begin{itemize}
\item[(a)] $\textrm{supp}\chi_q\subset \Omega_q$, in particular $\textrm{supp}\chi_q\cap \textrm{supp}\chi_{q+2}=\emptyset$;
\item[(b)] $\sum_{q=0}^\infty\chi^2_q=1$ in $\Omega$;
\item[(c)] For any $q$ and $j=0,1,2$ we have $\|\chi_q\|_{C^j(\Omega_q)}\leq Cd_q^{-j}$.
\end{itemize} 
Thus
\begin{equation}\label{e:gdecompevenodd}
g-\nabla u^T\nabla u-\tau\rho^2\textrm{Id}=\sum_{k=1}^N \sum_{q\textrm{ odd}}(\chi_qb_k)^2\varpi_k\otimes\varpi_k+\sum_{k=1}^N \sum_{q\textrm{ even}}(\chi_qb_k)^2\varpi_k\otimes\varpi_k.
\end{equation}
We apply Proposition \ref{p:stage} with the first sum on the right hand side of \eqref{e:gdecompevenodd}. From property (c) and \eqref{e:bk} we deduce
$$
\|\chi_qb_k\|_{C^j(\Omega_q)}\leq C(M)d_q^{1/2-j},
$$
so that the assumptions \eqref{e:stage-1a}-\eqref{e:stage-3a} hold in each $\Omega_q$ with parameters
$$
\delta=1,\quad\varepsilon=d_q,\quad \theta=\tilde\theta=d_q^{-1}.
$$
Observe that, using property (a), we may ``add'' each metric term $\sum_{k=1}^N(\chi_qb_k)^2\varpi_k\otimes\varpi_k$ with $q$ odd in parallel. Proposition \ref{p:stage} leads, for any $K\geq K_0(M,\gamma)$, to an immersion $w\in C^2(\Omega,\R^{n+1})$ such that for all $q\in\N$
\begin{align*}
\|w-u\|_{C^0(\Omega_q)}&\leq \overline{M}d_q^2\frac{1}{K}\\
 \|w-u\|_{C^1(\Omega_q)}&\leq \overline{M}d_q^{1/2}\\
 \|w\|_{C^2(\Omega_q)}&\leq \overline{M}d_q^{-1}K^N.
 \end{align*}
Moreover
$$
\nabla w^T\nabla w=\nabla u^T\nabla u+\sum_{k=1}^N \sum_{q\textrm{ odd}}(\chi_qb_k)^2\varpi_k\otimes\varpi_k+\mathcal{E}_{\textrm{odd}}
$$
with
\begin{align*}
\|\mathcal{E}_{\textrm{odd}}\|_{C^0(\Omega_q)}&\leq C(M,\gamma)d_q\frac{1}{K}\\
\|\mathcal{E}_{\textrm{odd}}\|_{C^1(\Omega_q)}&\leq C(M,\gamma)d_q^{-1/2}K^{N-1}.
\end{align*}
Then, $w$ again satisfies the assumptions \eqref{e:stage-1a}-\eqref{e:stage-3a} in each $\Omega_q$ with parameters
$$
\delta=1,\quad\varepsilon=d_q,\quad \theta=\tilde\theta=d_q^{-1}K^N.
$$
Therefore, applying Proposition \ref{p:stage} once more (with the same $K$), with the second term in \eqref{e:gdecompevenodd} (the even $q$'s) leads to an immersion $v\in C^2(\Omega,\R^{n+1})$ with 
\begin{align*}
\|v-w\|_{C^0(\Omega_q)}&\leq \overline{M}d_q^2\frac{1}{K^{N+1}}\\
 \|v-w\|_{C^1(\Omega_q)}&\leq \overline{M}d_q^{1/2}
 \end{align*}
and 
\begin{equation}\label{e:v-c2esitmate}
 \|v\|_{C^2(\Omega_q)}\leq \overline{M}d_q^{-1}K^{2N}.
\end{equation}
Moreover
\begin{equation}\label{e:adaptedmetricerror}
\begin{split}
\nabla v^T\nabla v&=\nabla u^T\nabla u+\sum_{k=1}^N \sum_{q\in\N}(\chi_qb_k)^2\varpi_k\otimes\varpi_k+\mathcal{E}_{\textrm{odd}}+\mathcal{E}_{\textrm{even}}\\
&=\nabla u^T\nabla u+\sum_{k=1}^N b_k^2\varpi_k\otimes\varpi_k+\mathcal{E}
\end{split}
\end{equation}
with $\mathcal{E}=\mathcal{E}_{\textrm{odd}}+\mathcal{E}_{\textrm{even}}$ and
\begin{align*}
\|\mathcal{E}_{\textrm{even}}\|_{C^0(\Omega_q)}&\leq C(M,\gamma)d_q\frac{1}{K}\\
\|\mathcal{E}_{\textrm{even}}\|_{C^1(\Omega_q)}&\leq C(M,\gamma)d_q^{-1/2}K^{2N-1}.
\end{align*}
Putting things together we deduce for every $q\in\N$
\begin{align*}
\|v-u\|_{C^0(\Omega_q)}&\leq \overline{M}d_q^2\frac{1}{K}\,,\\
\|v-u\|_{C^1(\Omega_q)}&\leq \overline{M}d_q^{1/2}\,,\\
\|\mathcal{E}\|_{C^0(\Omega_q)}&\leq C(M,\gamma)d_q\frac{1}{K}\,,\\
\|\mathcal{E}\|_{C^1(\Omega_q)}&\leq C(M,\gamma)d_q^{-1/2}K^{2N-1}\,.
\end{align*}

Now we are in a position to show $v$ is our desired adapted short immersion. First of all, observe that for $x\in \Omega_q$ we have
$x_n\sim d_q\sim \rho^2(x)$. Therefore from \eqref{e:adapted-rho} and \eqref{e:v-c2esitmate}, we get \eqref{e:rho-adapt} and \eqref{e:u-adapt}. Besides,
\begin{align*}
|\mathcal{E}(x)|&\leq C(M,\gamma)K^{-1}\rho^2(x)\,,\\
|\nabla\mathcal{E}(x)|&\leq C(M,\gamma)K^{2N-1}\rho^{-1}(x).
\end{align*}
Let 
$$
G(x)=-\frac{\mathcal{E}(x)}{\tau\rho^2(x)},
$$
so that
\begin{equation*}
g-\nabla v^T\nabla v=\tau\rho^2\textrm{Id}-\mathcal{E}=\tau\rho^2(\textrm{Id}+G),
\end{equation*}
and, using \eqref{e:adapted-rho}
\begin{align*}
|G(x)|\leq C(M,\gamma)(\tau K)^{-1},\quad |\nabla G(x)|\leq C(M,\gamma)\rho^{-3}(x)\,.
\end{align*}
In particular, by choosing $K$ sufficiently large, we can ensure that \eqref{e:g-adapt} is satisfied. Finally, observe that for any $\alpha_0<1/3$ and any $x\in\Omega_q$ 
\begin{align*}
\|v-u\|_{C^{1, \alpha_0}(\Omega_q)}&\leq \|v-u\|_{C^1(\Omega_q)}^{1-\alpha_0}\|v-u\|_{C^2(\Omega_q)}^{\alpha_0}\leq C(M,\gamma,K) d_q^{(1-3\alpha_0)/2}
\end{align*}
is bounded independently of $q$. Consequently $v\in C^{1,\alpha_0}(\overline{\Omega})$. 
This concludes the proof.

\section{Proof of Theorem \ref{t:isometricextend}}\label{s:extension}
In this section, we will show that any adapted short immersion can be approximated by isometric immersions with the aid of Corollary \ref{c:stage} and some ideas in \cite{DS17}. The proof is divided into three steps.

{\bf Step 1. Parameter definition.}  Since $v$ is an adapted short immersion, we can write
$$g-\nabla v^T\nabla v=\rho_0^2(\textrm{Id}+G_0).$$
From the definition of adapted short immersion, one has
\begin{equation}\label{e:g0}
\rho_0\geq0, \quad |G_0|\leq r_1.
\end{equation}
Let
\begin{equation}\label{e:eps0a}
\eps_0=\max\{\max_{x\in\Omega}\rho_0^2(x), 1\}, \quad 0<a<\frac{1}{2},
\end{equation}
and define two sequences of constants $\{\eps_q\}, \{\theta_q\}$ as
\begin{equation}\label{e:espqthetaq}
\eps_q=\eps_0 A^{-2aq}, \quad \theta_q=A^{(n_*+a)q+3a}
\end{equation}
with large $A>1$ to be fixed during the proof. 
For future reference we define the sets
$$
\Omega_j^{(q)}=\left\{x\in\overline\Omega:\,\rho_q(x)>\frac{9}{8}\eps_{j+1}^{1/2}\right\}
$$
for any $j, q=0,1,2,\cdots$. Here $\rho_q(x), q=1, 2,\cdots$ will be defined in Step 2. Then it is easy to see that $\Omega^{(q)}_j\subset \Omega^{(q)}_{j+1}$ and 
$$\bigcup_j\Omega^{(q)}_j=\{x\in \overline\Omega:\rho_q(x)>0\}.$$ In particular, when $q=0,$ 
$\Omega^{(0)}_j=\{x\in \overline\Omega:\rho_0(x)>\frac{9}{8}\eps_{j+1}^{1/2}\}.$ Using Definition \ref{d:adapted} it is not difficult to verify that, whenever $x\in \Omega^{(0)}_j$ for some $j\geq 0$, we have 
\begin{equation}\label{e:v0rho0G0}
|\nabla^2 v(x)|\leq M\eps_j^{1/2}\theta_j, \quad |\nabla\rho_0(x)|\leq M\eps_{j+1}^{1/2}\theta_{j},\quad \quad |\nabla G_0(x)|\leq M\theta_j.
\end{equation}
In fact, if $\rho_0(x)>\frac{9}{8}\eps_{j+1}^{1/2}$ for some $j$, 
from \eqref{e:u-adapt} and using that $\eps_0\geq 1$ and $n_*\geq 1\geq 2a$, we obtain
\begin{align*}
|\nabla^2v(x)|\leq &M\rho_0(x)^{-2}\leq MA^{2aj+2a}=(MA^{-n_*j+2aj-a})\eps_0^{1/2}A^{(n_*+a)j+3a}A^{-aj}\\
\leq &M\eps_j^{1/2}\theta_j.
\end{align*}
Similarly, from \eqref{e:rho-adapt} we have
\begin{align*}
|\nabla\rho_0(x)|\leq&M\rho_0(x)^{-1}
\leq MA^{aj+a}=(MA^{aj-n_*j-a})\eps_0^{1/2}A^{-a(j+1)}A^{(n_*+a)j+3a}\\
\leq &M\eps_{j+1}^{1/2}\theta_j,
\end{align*}
and from \eqref{e:g-adapt}
\begin{align*}
|\nabla G_0(x)|\leq& M\rho_0(x)^{-3}\leq MA^{3aj+3a}=(MA^{2aj-n_*j})A^{(n_*+a)j+3a}\\
\leq &M\theta_j.
\end{align*}
Furthermore we may assume without loss of generality that $M\geq \overline{M}$, 
where $\overline{M}$ is the constant in Corollary \ref{c:stage}.


{\bf Step 2. Inductive construction.}
We now use Corollary \ref{c:stage} to construct a sequence of smooth adapted short immersions $\{v_q\}$, and corresponding $\{\rho_q\}$, $\{G_q\}$ such that in $\overline\Omega$
\begin{equation}\label{e:relation}
g-\nabla v_q^T\nabla v_q=\rho_q^2(\textrm{Id}+G_q),
\end{equation}
and the following statements hold:
\begin{itemize}
\item[$(1)_q$]  For any $x\in\overline\Omega$, 
$$
|G_q(x)|\leq r_2\quad\textrm{ and }\quad 0\leq\rho_q(x)\leq 4\eps_q^{1/2};
$$
\item[$(2)_q$]  If $\rho_q(x)\leq 2\eps_{q+1}^{1/2},$ then $|G_q(x)|\leq r_1$;
\item[$(3)_q$]  If $x\in\Omega_j^{(q)}$, for some $j\geq q$, then
\begin{align*}
|\nabla^2v_q(x)|\leq M\eps_j^{1/2}\theta_j, \quad |\nabla\rho_q(x)|\leq M\eps_{j+1}^{1/2}\theta_{j},\quad \quad |\nabla G_q(x)|\leq M\theta_j.
\end{align*}
\end{itemize}
In addition, if $x\in\overline\Omega\setminus\Omega_q^{(q)}$, $v_q(x)=v_{q-1}(x)$ and
for any $x\in\overline\Omega,$
\begin{equation}\label{e:vqdifference}
\|v_q-v_{q-1}\|_j\leq M\eps_q^{1/2}\theta_q^{j-1}, j=0, 1.
\end{equation}

\bigskip

Set $v_0=v$. From \eqref{e:g0}, \eqref{e:espqthetaq} and \eqref{e:v0rho0G0}, we see that conditions $(1)_0-(3)_0$ are satisfied by $v_0$. Next, suppose $v_q$, $\rho_q, G_q$ have been defined satisfying relation \eqref{e:relation} and conditions $(1)_q-(3)_q$ hold.   
Let $\chi(s)\in C^\infty(0, \infty)$ be a smooth cut-off function satisfying
$$
\chi(s)=\begin{cases} 1& s\geq 2,\\ 0&s\leq \frac{7}{4}.\end{cases}
$$
and set
$$
\phi_q(x)=\chi\left(\frac{\rho_q(x)}{\eps_{q+1}^{1/2}}\right), \qquad
\psi_q(x)=\chi\left(\frac{4\rho_q(x)}{3\eps_{q+1}^{1/2}}\right).
$$
Observe that $\supp\phi_q\subset\supp\psi_q\subset\Omega_q^{(q)}$, so that, using $(3)_q$
\begin{equation}\label{e:phiqpsiq}
\|\nabla \phi_q(x)\|_0\leq CM\theta_q,\quad \|\nabla \psi_q(x)\|_0\leq CM\theta_q.
\end{equation}
Hereafter $C$  denotes geometric constant.
Define
\begin{equation}\label{e:hq}
h_q:=(g-\nabla v_q^T\nabla v_q-\eps_{q+1} \textrm{Id})\phi_q^2=\tilde{\rho}_q^2(\textrm{Id}+\tilde{G}_q),
\end{equation}
with
\begin{align*}
\tilde{\rho}_q=\phi_q\sqrt{\rho_q^2-\eps_{q+1}}, \quad \tilde{G}_q=\frac{\psi_q\rho_q^2}{\rho_q^2-\eps_{q+1}}G_q.
\end{align*}
Observe that  the second equality in \eqref{e:hq} holds because $\psi_q=1$ on $\supp \phi_q$ and that, furthermore, $\tilde\rho_q$ is well-defined because $\rho_q\geq \tfrac{3}{2}\eps_{q+1}^{1/2}$ on $\supp\phi_q$. Consequently $\tilde{\rho}_q\in C^1(\Omega)$, $\tilde{G}_q\in C^1(\Omega;\R^{n\times n})$. Next we derive estimates on $\tilde{\rho}_q$ and $\tilde{G}_q$.

\emph{Estimates for $\tilde{\rho}_q$}: First of all, $0\leq \tilde{\rho}_q\leq \rho_q\leq4\eps_q^{1/2}$. Moreover, thanks to the fact that $\rho_q(x)\geq\frac{3}{2}\eps_{q+1}^{1/2}$ on $\supp \phi_q$, and $\frac{3}{2}\eps_{q+1}^{1/2}\leq\rho_q(x)\leq2\eps_{q+1}^{1/2}$ on  $\textrm{supp }\nabla\phi_q$,  using $(1)_q$ and $(3)_q$ we have
\begin{align*}
\|\nabla\tilde{\rho}_q\|_0&\leq \|\phi_q\nabla\sqrt{\rho_q^2-\eps_{q+1}}\|_0+\|\sqrt{\rho_q^2-\eps_{q+1}}\nabla\phi_q\|_0\\
&\leq CM\eps_q^{1/2}\theta_q.
\end{align*}

\emph{Estimates for $\tilde{G}_q$:} Note that 
$$
\tilde G_q=\psi_q G_q+\frac{\eps_{q+1}\psi_q}{\rho_q^2-\eps_{q+1}}G_q.
$$
Therefore, using $(1)_q$ and the fact that $\rho_q(x)\geq\frac{9}{8}\eps_{q+1}^{1/2}$ on $\supp \psi_q\subset\Omega_q^{(q)}$,  
\begin{align*}
\|\tilde{G}_q\|_0\leq\|G_q\|+\left\|\frac{\eps_{q+1}\psi_q}{\rho_q^2-\eps_{q+1}}G_q\right\|_0\leq \frac{81}{17}\|G_q\|_0\leq 5r_2\leq r_0.
\end{align*}
Moreover, 
\begin{align*}
\|\nabla\tilde{G}_q\|_0&\leq\left\|\frac{\psi_q\eps_{q+1}G_q}{(\rho_q^2-\eps_{q+1})^2}\nabla\rho_q^2\right\|_0+
\left\|\frac{\rho_q^2\psi_q}{\rho_q^2-\eps_{q+1}}\nabla G_q\right\|_0+\left\|\frac{\rho_q^2\nabla \psi_q}{\rho_q^2-\eps_{q+1}}G_q\right\|_0\\
&\leq CM\theta_q.
\end{align*}
Combining $(3)_q$ with the above estimates of $\tilde{\rho}_q, \tilde{G}_q$, we deduce that $\tilde{\rho}_q, \tilde{G}_q, v_q$ satisfy all the assumptions of Corollary \ref{c:stage} with constants 
$$
\eps=\delta=\eps_q,\quad \theta=\theta_q.
$$ Therefore, applying Corollary  \ref{c:stage} yields $v_{q+1}$ and $\mathcal{E}$, satisfying the following estimates
\begin{align}
&\|v_{q+1}-v_q\|_j\leq M\eps_q^{1/2}\theta_q^{j-1}A^{j-1}, j=0, 1;\label{e:vq+1:1}\\
&\|v_{q+1}\|_{2}\leq M\eps_q^{1/2}\theta_qA^{n_*};\label{e:vq+1:2}\\
&\|\mathcal{E}\|_0\leq C(M)\eps_qA^{-1}; \label{e:vq+1:3} \\
&\|\mathcal{E}\|_1\leq C(M)\eps_q\theta_qA^{n_*-1};\label{e:vq+1:4}
\end{align}
for some constant $C(M)$, along with the identity
\begin{align*}
\mathcal{E}=g-\nabla v_{q+1}^T\nabla v_{q+1}-[(1-\phi_q^2)(g-\nabla v_{q}^T\nabla v_q)+\phi_q^2\eps_{q+1}\textrm{Id}].
\end{align*}
Set 
\begin{align*}
\rho_{q+1}^2=\rho_q^2(1-\phi_q^2)+\eps_{q+1}\phi_q^2, \quad G_{q+1}=\frac{\rho_q^2G_q}{\rho_{q+1}^2}(1-\phi_q^2)+\frac{\mathcal{E}}{\rho_{q+1}^2},
\end{align*}
then $$g-\nabla v_{q+1}^T\nabla v_{q+1}=\rho_{q+1}^2(\textrm{Id}+G_{q+1}).$$
Thus $(\ref{e:relation})_{q+1}$ holds. We then give two remarks. The first one is that $\mathcal{E}$ and $\phi_q$ share the same support which can be seen from the construction in Corollary  \ref{c:stage}. Thus $G_{q+1}$ is well defined. The other one is  that if $ x\in\overline{\Omega}\setminus\Omega_q^{(q)},$ $\rho_q\leq\frac{9}{8}\eps_{q+1}^{1/2}$, then $\phi_q=0$ and  
$$(v_{q+1}, \rho_{q+1}, G_{q+1})=(v_q, \rho_q, G_q).$$
Using \eqref{e:espqthetaq} and  \eqref{e:vq+1:1}, we gain \eqref{e:vqdifference}. It remains to verify  $(1)_{q+1}-(3)_{q+1}.$

\emph{Verification of $(1)_{q+1}:$}  First, on $\textrm{supp }(1-\phi_q^2),$ one gets $\rho_q(x)\leq2\eps_{q+1}^{1/2}$, thus
$$\rho_{q+1}^2\geq\rho_q^2(1-\phi_q^2)+\frac{1}{4}\rho_q^2\phi_q^2\geq\frac{1}{4}\rho_q^2,$$
and $|G_q|\leq r_1$ from $(2)_q.$ So
\begin{equation}\label{e:rhoq+11}
\left|\frac{\rho_q^2G_q}{\rho_{q+1}^2}(1-\phi_q^2)\right|\leq 4r_1.
\end{equation}
Secondly, from the formula of $\rho_{q+1}$ and the fact that  $\rho_{q}(x)\geq\frac{3}{2}\eps_{q+1}^{1/2}$ when $x\in \textrm{supp }\phi_q,$ one has $\rho_{q+1}^2\geq\eps_{q+1}$ on $\textrm{supp }\phi_q.$ Besides, we gain from \eqref{e:vq+1:3}, on $\textrm{supp }\phi_q,$
\begin{equation*}
\|\mathcal{E}\|_0\leq\omega\eps_{q+1},
\end{equation*} 
for some small $\omega$ to be fixed, provided taking $A$ larger. Thus on $\textrm{supp }\phi_q$, we have
\begin{equation}\label{e:rhoq+12}
\left|\frac{\mathcal{E}}{\rho_{q+1}^2}\right|\leq\omega.
\end{equation}
Finally, from \eqref{e:rhoq+11} and \eqref{e:rhoq+12}, we gain
\begin{align*}
|G_{q+1}|\leq4r_1+\omega\leq r_2,
\end{align*}
provided $\omega<r_2-4r_1.$ 

Moreover, we can derive the estimate of $\|\rho_{q+1}\|_0$ as follows.
\begin{align*}
\rho_{q+1}^2\leq4\eps_{q+1}(1-\phi_q^2)+\eps_{q+1}\phi_q^2\leq 4\eps_{q+1},
\end{align*}
which leads to $\|\rho_{q+1}\|_0\leq 4\eps_{q+1}^{1/2},$ where we again use the fact that $\rho_q\leq2\eps_{q+1}^{1/2}$ when $x\in \textrm{supp }(1-\phi_q^2).$

\emph{Verification of $(2)_{q+1}:$} If $\rho_{q+1}(x)\leq2\eps_{q+2}^{1/2}<\frac{7}{4}\eps_{q+1}^{1/2},$ so  $\phi_q(x)=0$ at such point, and then $v_{q+1}=v_q$, which helps us get $(2)_{q+1}$ directly from $(2)_q.$

\emph{Verification of $(3)_{q+1}:$} First of all, observe that  $\rho_{q+1}(x)\geq\eps_{q+1}^{1/2}>\frac{9}{8}\eps_{q+2}^{1/2}$ when $x\in \textrm{supp }\phi_q,$ whereas we recall that
$$(v_{q+1}, \rho_{q+1}, G_{q+1})=(v_q, \rho_q, G_q) \text{ when } x\not\in  \textrm{supp }\phi_q,$$
thus we only need to show the case in which $j=q+1$ and $x\in \textrm{supp }\phi_q$. 

For $|\nabla^2 v_{q+1}|,$   using \eqref{e:eps0a}, \eqref{e:espqthetaq} and \eqref{e:vq+1:2}, one has
\begin{equation*}
\|v_{q+1}\|_{2}\leq M\eps_{q+1}^{1/2}\theta_{q+1},
\end{equation*}
after taking $A$ larger.
For $|\nabla\rho_{q+1}|,$ we can calculate 
\begin{align*}
\|\nabla\rho_{q+1}^2\|_0&\leq\|2\rho_q\nabla\rho_q(1-\phi_q^2)\|_0+\|2\phi_q(\eps_{q+1}-\rho_q^2)\nabla\phi_q\|_0\\
&\leq CM\eps_{q+1}\theta_q,
\end{align*}
which leads to 
\begin{align*}
|\nabla\rho_{q+1}|\leq\frac{|\nabla\rho_{q+1}^2|}{2\rho_{q+1}}\leq CM\eps_{q+1}^{1/2}\theta_{q}\leq M\eps^{1/2}_{q+2}\theta_{q+1},
\end{align*}
where we have used $\rho_{q+1}(x)\geq\eps^{1/2}_{q+1}$ when $x\in \textrm{supp }\phi_q,$ and taken $A$ larger.

For $|\nabla G_{q+1}|$, using \eqref{e:eps0a} and \eqref{e:vq+1:4}, one is able to obtain 
\begin{align*}
\|\nabla G_{q+1}\|_0&\leq\frac{1}{\min\rho_{q+1}^2}\left(\|G_q(1-\phi_q^2)\nabla\rho_q^2\|_0+\|\rho_q^2(1-\phi_q^2)\nabla G_q\|_0+\|\rho_q^2G_q\nabla\phi_q^2\|_0\right)\\
&\quad +\frac{\|\mathcal{E}\|_1}{\min\rho_{q+1}^2}
+\frac{\|\nabla\rho_{q+1}^2\|_0}{\min\rho_{q+1}^4}(\|\rho_q^2G_q(1-\phi_q^2)\|_0+\|\mathcal{E}\|_0)\\
&\leq CM\theta_q+\frac{C(M)\eps_q\theta_qA^{n_*-1}}{\eps_{q+1}}
\leq M\theta_{q+1},
\end{align*}
where we have utilized the fact that $\rho_{q+1}^2\geq\eps_{q+1}$ when $\mathcal{E}\neq0.$ 

{\bf Step 3. Convergence and conclusion.} Finally, we concentrate on the convergence of $\{v_q\}.$ From $(3)_q$ and $v_q=v_{q-1}$ when $x\in \overline\Omega\setminus\Omega_q^{(q)}$, we gain 
\begin{equation}\label{e:vq2}
\|v_q-v_{q-1}\|_{2}\leq\|v_q\|_2+\|v_{q-1}\|_2\leq 2M\eps_q^{1/2}\theta_q.
\end{equation}
Interpolation $\|v_{q}-v_{q-1}\|_1\leq M\eps_q$  and \eqref{e:vq2} gives
$$\|v_q-v_{q-1}\|_{1+\alpha}\leq 2M^2\eps_q^{1/2}\theta_q^\alpha\leq K(M, \eps_0)A^{((n_*+a)\alpha-a)q},$$
to make which convergent when $q$ goes to infinity, we should take
$$\alpha<\frac{a}{n_*+a}\rightarrow\frac{1}{2n_*+1},\text{ as } a\rightarrow\frac{1}{2}.$$
Then $$\sum_{q>m}\|v_q-v_{q-1}\|_{1+\alpha}\rightarrow0, \text{as } m\rightarrow\infty,$$
provided taking $\alpha\in[0,\frac{1}{2n_*+1})$. Obviously such $\alpha\leq\alpha_0,$ thus the sequence of adapted short immersions $\{v_q\}$ is convergent in $C^{1, \alpha}(\overline\Omega)$.
Let $\bar{v}$ be the limit, then
$$\|\bar{v}-v_0\|_0\leq\sum_{q=1}^{\infty}\|v_q-v_{q-1}\|_0\leq\sum_{q=1}^\infty M\eps_q^{1/2}\theta_q^{-1}
\leq 2M\eps_0^{1/2}A^{-n_*-2a}.$$
 Hence, upon taking $A$ larger, one can get
$$\|\bar{v}-v\|_0=\|\bar{v}-v_0\|_0\leq\epsilon.$$
Furthermore,
\begin{equation}
\|g-\nabla\bar{v}^T\nabla\bar{v}\|_0\leq\lim_{q\rightarrow\infty}(1+r_0)n\rho_q^2\leq\lim_{q\rightarrow\infty}32n\eps_q=0,
\end{equation}
which means that $\bar{v}$ is isometric in $\overline\Omega.$  Therefore, we get our desired isomeric immersion.

\section*{Acknowledgments}

The authors would like to thank the hospitality of the Max-Plank Institute of Mathematics in the Sciences, and gratefully acknowledge the support of the ERC Grant Agreement No. 724298.
%
\bigskip


\end{document}